\documentclass[twoside,
]{amsart}

  \usepackage[utf8]{inputenc}
  \usepackage{amsmath}
\usepackage{enumerate}
  \usepackage{amssymb}
  \usepackage{amsthm}
  \usepackage{bm}
  \usepackage{graphicx}
  \usepackage{color}
\usepackage{accents}
\usepackage{todonotes}

\newtheorem{Th}{Theorem}

\newtheorem{lem}{Lemma}
\newtheorem{rem}{Remark}

\newtheorem{Cor}{Corollary}
\newtheorem*{conj}{Conjecture}

\newtheorem*{que}{Question}
\newtheorem*{thank}{\ \ \ Acknowledgment}

\def\scalar(#1,#2){(#1\mid#2)}

\renewcommand{\hat}{\widehat}

\newcommand{\ca}{\mathcal{A}}
\newcommand{\cb}{\mathcal{B}}

\newcommand{\bmu}{\bm \mu}

\newcommand{\R}{{\mathbb{R}}}
\newcommand{\Pro}{{\mathbb{P}}}
\newcommand{\T}{{\mathbb{T}}}
\newcommand{\C}{{\mathbb{C}}}
\newcommand{\Z}{{\mathbb{Z}}}
\newcommand{\N}{{\mathbb{N}}}
\newcommand{\E}{{\mathbb{E}}}

\newcommand{\vep}{\varepsilon}

\newcommand{\mob}{\boldsymbol{\mu}}
\newcommand{\lamob}{\boldsymbol{\lambda}}
\newcommand{\bnu}{\boldsymbol{\nu}}
\newcommand{\tend}[3][]{\xrightarrow[#2\to#3]{#1}}



\newcommand{\ds}{\displaystyle}

\newcommand*{\Resize}[2]{\resizebox{#1}{!}{$#2$}}
\newcommand*{\Sup}{\Resize{0.7cm}{sup}}


\title[A cubic nonconventional ergodic average]{A cubic nonconventional ergodic average with M\"{o}bius and Liouville weight}
\author{E. H. El Abdalaoui}
\address{ Department of Mathematics, University
of Rouen, LMRS, UMR 60 85, Avenue de l'Universit\'e, BP.12, 76801
Saint Etienne du Rouvray - France}
\email{elhoucein.elabdalaoui@univ-rouen.fr }
\author{Xiangdong Ye}
\address{Wu Wen-Tsun Key Laboratory of Mathematics, USTC, Chinese Academy of Sciences, Department of Mathematics, University
of science and technology of China, Hefei, Anhui, 230026- China}
\thanks{The second author is supported by NNSF of China (11371339  and 11431012).\\
$^{1}$ In a forthcoming version, we establish that the cubic nonconventional ergodic average of any order with M\"{o}bius and Liouville weight converge almost surely to zero.}

\email{yexd@ustc.edu.cn}

\begin{document}
\date{\today}
\maketitle
{\renewcommand\abstractname{Abstract}
\begin{abstract}
It is shown that the cubic nonconventional ergodic average of order 2 with M\"{o}bius and Liouville
weight converge almost surely to zero\footnote{}. As a consequence, we obtain that the Ces\`{a}ro mean of the self-correlations
and some moving average of the self-correlations of M\"{o}bius and Liouville functions converge to zero. Our proof
gives, for any $N \geq 2$ and $\epsilon>0$,
$$\frac1{N}\sum_{m=1}^{N}\Big|\frac1{N}\sum_{n=1}^{N} \bmu(n) \bmu(n+m)\Big| \leq \frac{C}{\ln(N)^{\epsilon}},$$
and
$$\frac1{N}\sum_{m=1}^{N}\Big|\frac1{N}\sum_{n=1}^{N} \lamob(n) \lamob(n+m)\Big| \leq \frac{C}{\ln(N)^{\epsilon}}$$
where $C$ is a constant which depends only on $\varepsilon$.
\vspace{7cm}

\hspace{-0.7cm}{\em AMS Subject Classifications} (2000): 28D15 (Primary), 05D10, 11B37, 37A45 (Secondary).\\

{\em Key words and phrases:} A nonconventional ergodic theorem along cube, nonconventional averages, Ces\`{a}ro mean, moving average, M\"{o}bius function, Liouville function, Davenport estimation, Chowla conjecture, Elliott's conjecture, Sarnak's conjecture, self-correlation.\\

\end{abstract}

\newpage
\section{Introduction.}
The purpose of this note is motivated, on one hand, by the recent great interest on M\"{o}bius
function from dynamical point view, and on the other hand, by the problem of the multiple recurrence which
goes back to the seminal work of Furstenberg \cite{Fur}. This later problem has nowadays a long history.\\

The dynamical studies of M\"{o}bius function was initiated recently by Sarnak in \cite{Sarnak1}\footnote{See also
\cite{Sarnak2}, \cite{Sarnak3}, \cite{Sarnak4}, \cite{Sarnak5}}. There, Sarnak made a conjecture that  M\"{o}bius
function is orthogonal to any deterministic dynamical sequence. Notice that this precise the definition of reasonable
sequence in the  M\"{o}bius randomness law mentioned by Iwaniec-Kowalski in \cite[p.338]{Iwaniec}.  Sarnak further
mentioned that Bourgain's approach allows to prove that for almost all point $x$ in any measurable dynamical system
$(X,\ca,T,\Pro)$, the M\"{o}bius function is orthogonal to any dynamical sequence $f(T^nx)$. In \cite{al-lem},
using a spectral theorem combined with Davenport estimation and Etmadi's trick, the authors gave a simple proof.
Subsequently, Cuny and Weber gave a  proof in which they mentioned that there is a rate in this almost sure
convergence \cite{Cuny-W}. They further used Bourgain's method to prove that the almost sure convergence holds
for the other arithmetical functions, like the divisor function, the theta function and  the generalized
Euler totient function. Very recently, using Green-Tao estimation \cite{Green-Tao} combined with the method in \cite{al-lem}, Eisner in \cite{Tanja}
proved that almost surely the dynamical sequence $f(T^{p(n)}x)$, where $p$ is an integer polynomial,
is orthogonal to the M\"{o}bius function. She further mentioned that the M\"{o}bius function is a good
weight (with limit 0) for the multiple polynomial mean ergodic theorem by Qing Chu's result \cite{Chu}. Subsequently, in a very recent preprint \cite{Nikos-Host},
Host and Frantzikinakis  established that any multiplicative function with mean value along any arithmetic sequence is a good weight for the multiple polynomial mean ergodic theorem (with limit 0 for aperiodic multiplicative functions). \\

Here, we are interested in the pointwise convergence of cubic nonconventional ergodic average with M\"{o}bius
and Liouville weight. The convergence of cubic nonconventional ergodic average was initiated by Bergelson in \cite{Berg},
where convergence in $L^2$ was shown for order 2 and under the extra assumption that all the
transformations are equal. Under the same assumption, Bergelson's result was extended by
Host and Kra for cubic averages of order 3  in \cite{Host-K1}, and for arbitrary order in \cite{Host-K2}.
Assani proved that pointwise convergence of cubic nonconventional ergodic average of order 3 holds for not necessarily
commuting maps in \cite{Assani1, Assani2}, and further established the pointwise convergence for cubic averages of
arbitrary order holds when all the maps are equal.
In \cite{chu-nikos}, Chu and Frantzikinakis completed the study and established the pointwise convergence for
the cubic averages of arbitrary order.\\

Very recently, Huang-Shao and Ye \cite{Ye} gave a topological-like proof of the pointwise convergence
of the cubic nonconventional ergodic average when all the maps are equal. They further applied their
method to obtain the pointwise convergence of nonconventional ergodic average for a distal system.\\

Here, we establish the pointwise convergence of cubic of order 2 with M\"{o}bius weight or Liouville weight.
The proof depends heavily on the double recurrence Bourgain's theorem (DRBT for short) \cite{BourgainD}. As a
consequence, we obtain that the C\'esaro mean of the self-correlation of M\"{o}bius function and those of
Liouville function converge to zero.\\

Using Davenport estimation, we further deduce that some moving average of the self-correlation of  M\"{o}bius function or Liouville function converge to zero, and they are summable along any divergent geometric sequence. The paper is organized as follows:\\

In section 2, we recall the main ingredients needed for the proof. In section 3, we state
our main results and its consequences. In section 4, we give a proof of our first main result when
at least one of the maps has a discrete spectrum.
In section 5, we prove our second main result which assert that the Ces\`{a}ro mean and
some moving average of the self-correlations of M\"{o}bius and Liouville functions converge to zero. In section 6, we prove our first main result when at least one of dynamical system is a nilseystem.
In section 7, we establish the Wiener-Wintner's version of Katai-Bourgain-Sarnak-Ziegler criterion,
and we end the section by proving our first main result.\\
When this paper was under preparation, we learned that Matom\"{a}ki, Radzwi{\l}{\l} and Tao \cite{MRtao} proved that for any natural number
$k$, and for any $10 \leq H \leq X$, we have
$$\sum_{1 \leq h_1,h_2,\cdots,h_k \leq H}\Big|\sum_{1 \leq n \leq X}\lamob(n+h_1)\cdots
\lamob(n+h_k)\Big|\ll k \Big(\frac{\log\log H}{\log H}+\frac1{\log^{\frac1{3000}}X}\Big) H^{k-1} X.$$
In the case $k=2$, this gives
$$\sum_{1 \leq h \leq X} \Big|\sum_{1 \leq n \leq X}\lamob(n)\lamob(n+h)\Big|\ll k \Big(\frac{\log\log H}{\log H}+\frac1{\log^{\frac1{3000}}X}\Big) H X.$$
This last estimation is largely bigger to our estimation when $H=X.$\\

We remind that besides, some estimation of limsup and liminf of some correlations of Liouville was obtain by several authors: Graham and Hansely, Harman-Pintz and Wolke, and Cassaigne-Ferenczi-Mauduit-Rivat and S\'{a}rk\"{o}zy. We referee to \cite{CF} for more details and for the references on the subject.

\section{Basic definitions and tools.}\label{Sec:dt}
Recall that the Liouville function $\lamob ~~:~~ \N^*\longrightarrow  \{-1,1\}$ is defined by
$$
\lamob(n)=(-1)^{\Omega(n)},
$$
where $\Omega(n)$ is the number of prime factors of $n$ counted with multiplicities with $\Omega(1)=1$. The integer $n$ is said to be not square-free if there is a prime number $p$ such that $n$ is in the class of $0$ mod $p^2$. The M\"{o}bius function
$\bmu ~:~ \N \longrightarrow \{-1,0,1\}$ is define as follows
$$
\bmu(n)=\begin{cases}
\lamob(n),& \text{ if $n$ is square-free ;}\\
1,& \text{ if }n=1;\\
0,& \text{ otherwise}.
\end{cases}
$$
We shall need the following crucial result due to Davenport \cite{Da} (for a recent proof see \cite{ramare}).
\begin{Th}For any $\epsilon>0$, for any $N \geq 2$, we have
\begin{eqnarray}\label{Dav1}
\sum_{n=1}^{N}\bmu(n) e^{2\pi i n  t} = O\Big(\frac{N}{\ln(N)^{\epsilon}}\Big),
\end{eqnarray}
uniformly in $t$.
\end{Th}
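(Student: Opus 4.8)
The plan is to establish \eqref{Dav1} by the Hardy--Littlewood circle method combined with Vaughan's identity; I note at the outset that the constant $C=C(\epsilon)$ produced this way is ineffective, the major--arc step resting on the Siegel--Walfisz theorem. Fix $\epsilon>0$, set $B:=8+2\epsilon$ (any larger exponent also works), and let $N$ be large. By Dirichlet's approximation theorem, to a given $t$ I attach coprime integers $a,q$ with $1\le q\le Q:=N(\ln N)^{-B}$ and $\beta:=t-a/q$ satisfying $|\beta|\le 1/(qQ)$; in particular $|\beta|\le q^{-2}$ and $|\beta|N\le(\ln N)^{B}$. I treat separately the major arcs $q\le(\ln N)^{B}$ and the minor arcs $(\ln N)^{B}<q\le Q$.

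On a major arc I would sort by residues modulo $q$, writing
\[
\sum_{n=1}^{N}\bmu(n)e^{2\pi i nt}=\sum_{r=1}^{q}e^{2\pi i ra/q}\sum_{\substack{n\le N\\ n\equiv r\,(\mathrm{mod}\ q)}}\bmu(n)e^{2\pi i n\beta},
\]
and for each $r$ apply Abel summation against $n\mapsto e^{2\pi i n\beta}$, whose total variation on $[1,N]$ is $O(1+|\beta|N)=O((\ln N)^{B})$; this bounds the inner sum by $O\big((\ln N)^{B}\max_{x\le N}\big|\sum_{n\le x,\,n\equiv r\,(q)}\bmu(n)\big|\big)$. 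The Siegel--Walfisz theorem for the M\"obius function (valid uniformly over all residues $r$ for $q\le(\ln N)^{B}$, after reducing a general residue to coprime ones and expanding into Dirichlet characters) gives $\sum_{n\le x,\,n\equiv r\,(q)}\bmu(n)\ll_{B}x\exp(-c\sqrt{\ln x})$. Summing over the at most $(\ln N)^{B}$ residue classes, the major--arc contribution is $\ll_{\epsilon}N(\ln N)^{2B}\exp(-c\sqrt{\ln N})\ll_{\epsilon}N(\ln N)^{-\epsilon}$.

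On a minor arc I would apply Vaughan's identity with parameters $u=v=N^{2/5}$, decomposing $\sum_{n\le N}\bmu(n)e^{2\pi i nt}$ (up to a term of size $O(N^{2/5})$) into $O(\ln N)$ sums of two shapes: \emph{Type~I} sums $\sum_{d\le D}c_d\sum_{m\le N/d}e^{2\pi i dmt}$ with $|c_d|\le1$ and $D\le N^{4/5}$, and \emph{Type~II} sums $\sum_{D_1<d\le 2D_1}\sum_{m}a_d b_m e^{2\pi i dmt}$ with $|a_d|,|b_m|\le\tau(\cdot)$ and $N^{2/5}\le D_1\le N^{3/5}$. For a Type~I sum the inner geometric series is $\ll\min(N/d,\|dt\|^{-1})$, and the classical estimate $\sum_{d\le D}\min(X/d,\|d\gamma\|^{-1})\ll(X/q+D+q)\ln N$ (for $|\gamma-a/q|\le q^{-2}$, $(a,q)=1$, $X,D\le N$) gives $\ll(N/q+N^{4/5}+q)\ln^2 N$. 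For a Type~II sum, Cauchy--Schwarz in $d$ followed by squaring out reduces matters to $\sum_{m_1,m_2}\tau(m_1)\tau(m_2)\min(D_1,\|(m_1-m_2)t\|^{-1})$, which by the same estimate together with $\sum_{m\le x}\tau(m)^2\ll x\ln^3 x$ is $\ll(Nq^{-1/2}+(Nq)^{1/2}+N^{4/5})\ln^4 N$ uniformly for $D_1$ in the stated range. Hence on a minor arc $\big|\sum_{n\le N}\bmu(n)e^{2\pi i nt}\big|\ll(Nq^{-1/2}+(Nq)^{1/2}+N^{4/5})\ln^4 N$; since $(\ln N)^{B}<q\le N(\ln N)^{-B}$, each of the three terms is $\ll N(\ln N)^{4-B/2}\ll_{\epsilon}N(\ln N)^{-\epsilon}$ by the choice $B=8+2\epsilon$. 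Adding the major-- and minor--arc bounds, with all implied constants depending only on $\epsilon$, proves \eqref{Dav1}.

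The main obstacle is the minor--arc analysis: deploying Vaughan's identity cleanly and, above all, estimating the \emph{Type~II} bilinear form, where after Cauchy--Schwarz everything hinges on the divisor--weighted version of $\sum_{d\le D}\min(X/d,\|d\gamma\|^{-1})$ and on keeping the various exponents of $\ln N$ under control. The major arcs are routine once Siegel--Walfisz is granted, and that theorem is the only non-elementary input as well as the source of the ineffectivity of $C(\epsilon)$.
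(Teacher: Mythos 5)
The paper does not actually prove this statement: it is Davenport's 1937 theorem, quoted with pointers to \cite{Da} and to Ramar\'e's exposition \cite{ramare}, so there is no internal proof to measure yours against. Your argument is the standard modern proof of that theorem --- Dirichlet approximation splitting $t$ into major and minor arcs, Siegel--Walfisz for $\bmu$ in progressions (hence the ineffective constant, which you correctly flag and which is compatible with the paper's statement) on the major arcs, and Vaughan's identity with the usual Type~I/Type~II bilinear estimates on the minor arcs --- and the bookkeeping is consistent: with $B=8+2\epsilon$ the minor-arc bound $(Nq^{-1/2}+(Nq)^{1/2}+N^{4/5})\ln^{4}N$ is $\ll N(\ln N)^{-\epsilon}$ throughout $(\ln N)^{B}<q\le N(\ln N)^{-B}$, and the major-arc bound $N(\ln N)^{2B}\exp(-c\sqrt{\ln N})$ is more than enough. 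The only inputs beyond routine manipulation are Siegel--Walfisz and the lemma on $\sum_{d\le D}\min(X/d,\|d\gamma\|^{-1})$, both standard and correctly invoked; the restriction to large $N$ is harmless since small $N$ is absorbed into $C(\epsilon)$. Davenport's original argument reaches the same Type~I/Type~II structure via Vinogradov's bilinear method rather than Vaughan's identity, but in substance your proof is the accepted one and is a legitimate self-contained derivation of the estimate the paper imports.
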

\noindent{}By Batman-Chowla's argument in \cite{Bat-Cho} we have\footnote{See the proof of Lemma 1 in \cite{Bat-Cho}.}
\begin{Th}
For any $\epsilon>0$, for any $N \geq 2$, we have
\begin{eqnarray}\label{Dav2}
\sum_{n=1}^{N}\lamob(n) e^{2\pi i n t} = O\Big(\frac{N}{\ln(N)^{\epsilon}}\Big),
\end{eqnarray}
uniformly in $t$.
\end{Th}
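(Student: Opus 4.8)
The plan is to reduce \eqref{Dav2} to Davenport's estimate \eqref{Dav1} for $\bmu$ by exploiting the elementary arithmetic identity
$$\lamob(n)=\sum_{d^{2}\mid n}\bmu\!\left(\frac{n}{d^{2}}\right),$$
valid for all $n\ge 1$; it expresses $\lamob$ as the Dirichlet convolution of $\bmu$ with the indicator function of the perfect squares, and it is immediate on comparing Dirichlet series, $\sum_{n\ge 1}\lamob(n)n^{-s}=\zeta(2s)\,\zeta(s)^{-1}$. Substituting this identity into the exponential sum and interchanging the order of summation, one obtains, for every real $t$ and every $N\ge 2$,
$$\sum_{n=1}^{N}\lamob(n)\,e^{2\pi i n t}=\sum_{d\le\sqrt N}\ \sum_{m\le N/d^{2}}\bmu(m)\,e^{2\pi i m\,(d^{2}t)}.$$

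Next I would estimate the inner sum uniformly in the new phase $d^{2}t$, splitting the range of $d$ at $N^{1/4}$. For $1\le d\le N^{1/4}$ we have $N/d^{2}\ge N^{1/2}\ge 2$ and $\ln(N/d^{2})\ge\tfrac12\ln N$, so applying \eqref{Dav1} with the phase $d^{2}t$ --- uniform in $t$ because $d^{2}t$ again runs over all of $\R$ --- gives
$$\Big|\sum_{m\le N/d^{2}}\bmu(m)\,e^{2\pi i m\,(d^{2}t)}\Big|\le C_{\epsilon}\,\frac{N/d^{2}}{\ln(N/d^{2})^{\epsilon}}\le 2^{\epsilon}C_{\epsilon}\,\frac{N}{d^{2}\,\ln(N)^{\epsilon}},$$
and summing over these $d$ with $\sum_{d\ge 1}d^{-2}=\pi^{2}/6$ bounds their contribution by $O\big(N/\ln(N)^{\epsilon}\big)$. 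For $N^{1/4}<d\le\sqrt N$ I would only use the trivial bound $\big|\sum_{m\le N/d^{2}}\bmu(m)\,e^{2\pi i m(d^{2}t)}\big|\le N/d^{2}$, whose sum over this range is at most $N\sum_{d>N^{1/4}}d^{-2}\ll N^{3/4}$; since $\ln(N)^{\epsilon}\ll N^{1/4}$ this is $o\big(N/\ln(N)^{\epsilon}\big)$. Combining the two parts, and enlarging the implied constant to absorb the two small values $N=2,3$, yields \eqref{Dav2}.

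I do not anticipate a genuine difficulty here: the argument is the standard hyperbola/convolution method, and only two minor points require attention. First, \eqref{Dav1} has to be invoked with the variable phase $d^{2}t$, so one should explicitly record that Davenport's bound is uniform in its phase --- this is precisely what transfers the uniformity in $t$ to the Liouville sum. Second, the splitting abscissa must be chosen both large enough that the tail $\sum_{d>N^{1/4}}d^{-2}$ makes the error $N^{3/4}$ negligible against $N/\ln(N)^{\epsilon}$ and small enough that $\ln(N/d^{2})\gg\ln N$ on the main range; any exponent strictly between $0$ and $1/2$ works, and $1/4$ is the convenient choice made above.
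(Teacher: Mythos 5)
Your proof is correct, and it is essentially the paper's own argument: the paper simply cites Lemma 1 of Bateman--Chowla, whose content is precisely the convolution identity $\lamob=\bmu*\mathbf{1}_{\square}$ followed by an application of Davenport's uniform bound to the inner sums. The only points to watch (non-integrality of $N/d^{2}$ and the values $N=2,3$) are minor and you handle them by adjusting the constant.
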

The  M\"{o}bius and Liouville functions are connected to the Riemann zeta function by the following
$$
\frac1{\zeta(s)}=\sum_{n=1}^{\infty}\frac{\mob(n)}{n^s}
\text{ and } \frac{\zeta(2s)}{\zeta(s)}=\sum_{n=1}^{\infty}\frac{\lamob(n)}{n^s}
\text{ for any }s\in\mathbb{C}\text{ with }\Re(s)>1.
$$
We remind that Chowla made a conjecture in~\cite{Cho} on the multiple self-correlation of $\bmu$ as follows:
\begin{conj}[of Chowla]
For each choice of $1\leq a_1<\dots<a_r$, $r\geq 0$, with $i_s\in \{1,2\}$, not all equal to~$2$, we have
\begin{equation}\label{cza}
\sum_{n\leq N}\mob^{i_0}(n)\cdot \mob^{i_1}(n+a_1)\cdot\ldots\cdot\mob^{i_r}(n+a_r)={\rm o}(N).
\end{equation}
\end{conj}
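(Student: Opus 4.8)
The plan is to reduce the conjectured bound \eqref{cza} to a pure Liouville correlation and then to attack that correlation with the dilation/entropy machinery built around the Matom\"{a}ki--Radziwi{\l}{\l} theorem. First I would absorb the exponents $i_s=2$: since $\mob^{2}=\mathbf{1}_{\mathrm{sqfree}}$ and $\mob^{2}(n)=\sum_{d^{2}\mid n}\mob(d)$, each factor carrying exponent $2$ can be expanded by this identity and its $d$-sum truncated, turning \eqref{cza} into a finite combination of genuine ($i_s=1$) $\mob$-correlations taken along arithmetic sub-progressions. The hypothesis that the $i_s$ are \emph{not all} equal to $2$ guarantees that at least one honest $\mob$ survives in every such term; this is essential, for if every exponent were $2$ the left side of \eqref{cza} would be a squarefree count of positive density and the bound would be false. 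Converting $\mob$ to the completely multiplicative $\lamob$ via $\lamob=\mathbf{1}_{\square}*\mob$, i.e. $\lamob(n)=\sum_{d^{2}\mid n}\mob(n/d^{2})$, and splitting dyadically in $d$, I would reduce the whole statement to the model estimate
\[
\sum_{n\le N}\lamob(n+a_{1})\cdots\lamob(n+a_{k})={\rm o}(N)
\]
for fixed distinct shifts $a_{1}<\dots<a_{k}$.

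For this model correlation I would run Tao's entropy-decrement scheme. The point is that complete multiplicativity gives $\lamob(pn)=-\lamob(n)$, so the correlation at a generic scale is comparable to its average over the residue classes $n\equiv0\ (\mathrm{mod}\ p)$ for primes $p$ in a slowly growing window; were the correlation not small, the information shared between $\lamob$ on $[1,N]$ and its values on these dilated copies would have to increase at each prime, which a bounded entropy budget forbids. The Matom\"{a}ki--Radziwi{\l}{\l} theorem is exactly the input that makes the increment usable, since it controls $\lamob$ on almost all short intervals and thereby lets one pass between the additive shifts $a_i$ and the multiplicative dilations by $p$. Carried out in full, this scheme yields the \emph{logarithmically averaged} form
\[
\sum_{n\le N}\frac{\lamob(n+a_{1})\cdots\lamob(n+a_{k})}{n}={\rm o}(\log N),
\]
and at present this is available for $k=2$ and for every odd $k$, while for general even $k\ge4$ even the logarithmic version remains unproved.

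The hard part, and here I must be candid, is removing the weight $1/n$ to recover \eqref{cza} as literally written. Logarithmic averaging assigns essentially equal mass to every dyadic block $[2^{j},2^{j+1}]$, so an entropy gain that is only spread thinly across scales already suffices; under the Ces\`{a}ro normalization of \eqref{cza} the single top dyadic block carries a positive proportion of the total mass, and nothing in the current argument excludes a correlation that is genuinely nonzero on a sparse sequence of scales yet vanishes on logarithmic average. This is not a technicality peculiar to large $k$: already the flagship instance $\sum_{n\le N}\lamob(n)\lamob(n+1)={\rm o}(N)$ is open for precisely this reason. A genuine proof of \eqref{cza} would therefore need a scale-by-scale form of the entropy bound, or an independent noncorrelation estimate for $\lamob$ uniform across all dyadic ranges; supplying either is the central obstacle, and it is the step I do not expect to complete with present technology.

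What the methods assembled in this note do deliver unconditionally is the shift-averaged surrogate of \eqref{cza} quoted in the abstract, in which the fixed tuple $(a_{1},\dots,a_{k})$ is replaced by an average over the shifts. For that version the cancellation need not be localized at any single scale, and Davenport's estimate \eqref{Dav2} for the exponential sums $\sum_{n\le N}\lamob(n)e^{2\pi i n t}$, fed through the spectral expansion of the self-correlation, already supplies the required $O(N/\ln(N)^{\epsilon})$ saving. It is exactly the passage from this averaged saving to the fixed-tuple statement \eqref{cza} that marks the boundary between what can be proved by these techniques and the conjecture itself.
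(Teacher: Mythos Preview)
The statement you have been asked to prove is not a theorem of the paper at all: it is Chowla's conjecture, stated explicitly as a \texttt{conj} environment and described by the authors as ``a notorious conjecture in number theory.'' The paper offers no proof of it and does not claim to; it is quoted only as background to Sarnak's conjecture and to the weaker shift-averaged estimates that the paper does establish. There is therefore no ``paper's own proof'' against which your proposal can be compared.

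Your write-up is honest about this: you correctly identify the reduction from $\mob$ to $\lamob$, the role of the entropy-decrement method, the current state of the logarithmically averaged results, and the genuine obstruction to removing the $1/n$ weight. But none of that constitutes a proof of \eqref{cza}; you say so yourself when you write that passing from the logarithmic average to the Ces\`{a}ro average ``is the step I do not expect to complete with present technology.'' That is exactly right, and it is why the statement remains a conjecture. If the assignment was to supply a proof, the only accurate answer is that no proof is known; what you have produced is a survey of partial progress, not a proof attempt with a fixable gap.
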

In \cite{Sarnak1}, Sarnak noticed  that Chowla conjecture is a notorious conjecture in number theory, and  formulated the following conjecture:
\begin{conj}[of Sarnak]
For any dynamical system $(X,T)$, where $X$ is a compact metric space and $T$ is homeomorphism of zero topological entropy, for any $f\in C(X)$ and any $x \in X$, we have
\begin{equation}\label{sar}
\sum_{n\leq N}f(T^nx)\mob(n)={\rm o}(N).
\end{equation}
\end{conj}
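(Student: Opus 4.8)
The plan is to reduce the orthogonality (\ref{sar}) to a bilinear correlation estimate via the Katai--Bourgain--Sarnak--Ziegler (KBSZ) criterion and then to exploit the structure theory of the orbit sequence as far as it reaches. The criterion asserts that if, for a fixed $x$ and all distinct primes $p\ne q$,
$$\limsup_{N\to\infty}\Big|\frac1N\sum_{n\le N} f(T^{pn}x)\,\ov{f(T^{qn}x)}\Big| = 0,$$
then (\ref{sar}) holds at that $x$. So the first step is to establish this vanishing of the multiplicatively dilated bilinear averages for every $x$, using that $T$ carries zero topological entropy. Since $f\in C(X)$ and $X$ is compact, control of these correlations for all $x$ is what upgrades the almost-everywhere statement (accessible by the spectral theorem together with (\ref{Dav1})) to the genuinely pointwise, everywhere statement that the conjecture demands.

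I would first dispose of the structured systems. When the relevant factor is the Kronecker (discrete-spectrum) factor, $n\mapsto f(T^nx)$ agrees, up to arbitrarily small error, with a trigonometric polynomial $\sum_k c_k e^{2\pi i n t_k}$, so (\ref{sar}) reduces term by term to Davenport's bound (\ref{Dav1}), which is uniform in the frequencies $t_k$ and hence in $x$. Passing up the Host--Kra tower, on the maximal nilfactor the orbit $f(T^nx)$ is a nilsequence, and the Green--Tao orthogonality estimate (the nilsequence analogue of (\ref{Dav1})) yields the same conclusion; this settles all distal systems and everything governed by a finite nilfactor tower. The identical argument with $\lamob$ in place of $\mob$, invoking (\ref{Dav2}) instead, covers the Liouville weight.

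The hard part will be the genuinely unstructured portion of a general zero-entropy system that survives above every nilfactor. There the orbit sequence admits no spectral or nilpotent normal form, and the dilated bilinear average displayed above cannot be controlled by any presently available equidistribution input. Indeed, the KBSZ reduction converts the required uniform cancellation into exactly the self-correlation vanishing (\ref{cza}) predicted by Chowla but not known unconditionally. This is where I expect the scheme to stall: the strategy transforms Sarnak's orthogonality into Chowla-type cancellation for $\mob$, so without that arithmetic input it proves (\ref{sar}) only for the structured classes, and the passage to an arbitrary zero-entropy $(X,T)$ remains the open core of the conjecture.
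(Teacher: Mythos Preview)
The statement you are attempting to prove is not a theorem in the paper; it is Sarnak's \emph{conjecture}, stated without proof because it is open. The paper never claims to establish \eqref{sar} for arbitrary zero-entropy systems. What the paper does prove is the special case where $(X,T)$ is a nilsystem, and there the argument is exactly your ``structured'' branch: one invokes the Green--Tao nilsequence orthogonality (Theorem~\ref{Green-Tao-th}) after Leibman's reduction to connected simply-connected nilpotent groups.

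Your proposal is therefore not a proof but an accurate diagnosis of the state of affairs. You correctly identify that the KBSZ criterion combined with Davenport/Green--Tao handles the Kronecker and nilfactor levels, and you correctly recognize that the residual ``unstructured'' part of a general zero-entropy system is precisely where no unconditional argument exists. Your closing paragraph is right: the reduction funnels the problem back into Chowla-type self-correlation input that is not available. One small correction: the KBSZ criterion does not require the bilinear limsup to vanish for \emph{all} pairs of distinct primes simultaneously; it suffices that for every $\varepsilon>0$ the limsup is below $\varepsilon$ for all primes $p\ne q$ below a threshold depending on $\varepsilon$ (see Theorem~\ref{WWKBSZ}). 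This does not rescue the argument, but it is the precise form one would need.

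In short, there is no gap to point to because you have already pointed to it yourself; the proposal is an honest sketch of the known partial results and an acknowledgment that the conjecture remains open, which matches the paper's treatment.
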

He further announced that Chowla conjecture implies Sarnak conjecture, and wrote ``we persist in maintaining
Conjecture \eqref{sar} as the central one even though it is much weaker than Conjecture \eqref{cza}. The point
is that Conjecture \eqref{cza} refers only to correlations of $\bmu$ with deterministic sequences and avoids the
difficulties associated with self-correlations." For the ergodic proof of the fact that Chowla conjecture implies
Sarnak conjecture we refer the reader to \cite{al-lem} \footnote{For a purely combinatorial proof see the references in \cite{al-lem} .}.\\

Given an arithmetical function $A$ ($A : \N\longrightarrow \C),$ and a positive integer $N \in \N$, for $n \in \{1,\cdots,N\}$, we define a self-correlation coefficient $c_{n,N}$ of A by

$$ c_{n,N}(A)=\frac1{N}\sum_{m=1}^{N}A(m)A(m+n).
$$
According to Wiener \cite{Wiener}, if the limit exists for each $n$, this gives the spectral measure of $A$.\\
\subsection*{Cubic averages and related topics.}

Let $(X,\cb,\Pro)$ be a Lebesgue probability space and given three measure preserving transformations $T_1,T_2,T_3$ on $X$.
Let $f_1,f_2,f_3 \in L^{\infty}(X)$.  The cubic nonconventional ergodic averages of order $2$ with weight $A$ are defined by   $$\frac1{N^2}\sum_{n,m=1}A(n)A(m)A(n+m)f_1(T_1^nx) f_2(T_2^nx) f_3(T_3^nx).$$
This nonconventional ergodic average can be seeing as a classical one as follows
$$\frac1{N^2}\sum_{n,m=1}\widetilde{f_1}({\widetilde{T_1}}^n(A,x))
{\widetilde{f_2}}({\widetilde{T_2}}^m(A,x)) {\widetilde{f_3}}({\widetilde{T_3}}^{n+m}(A,x)) ,$$
where $\widetilde{f_i}=\pi_0\otimes f_i, \widetilde{T_i}=(S \otimes T_i),~~~i=1,2,3$ and
$\pi_0$ is define by $x=(x_n)\longmapsto x_0$ on the space $Y=\C^{N}$ equipped with some probability measure.\\

The study of the cubic averages is closely and strongly related to the notion of seminorms introduced in \cite{Gowers} and \cite{Host-K2}. They are nowadays called Gowers-Host-Kra's seminorms.\\

Assume that $T$ is an ergodic measure preserving transformation on $X$. Then,  for any $k \geq 1$,
the Gowers-Host-Kra's seminorms on $L^{\infty}(X)$ are defined inductively as follows
 $$\||f|\|_1=\Big|\int f d\mu\Big|;$$
 $$\||f|\|_{k+1}^{2^{k+1}}=\lim\frac{1}{H}\sum_{l=1}^{H}\||f.f\circ T^l|\|_{k}^{2^{k}}.$$

 For each $k\geq 1$, the seminorm $\||.|\|_{k}$ is well defined. For details, we refer the reader to \cite{Host-K2} and \cite{Host-Studia}. Notice that
 the definitions of Gowers-Host-Kra's seminorms can be easily extended to non-ergodic maps as it was mentioned by Chu and
  Frantzikinakis in \cite{chu-nikos}.\\

The importance of the Gowers-Host-Kra's seminorms in the study of the nonconventional multiple ergodic averages is due to the existence of $T$-invariant sub-$\sigma$-algebra $\mathcal{Z}_{k-1}$ of $X$ that satisfies
$$\E(f|\mathcal{Z}_{k-1})=0 \Longleftrightarrow \||f|\|_{k}=0.$$
This was proved by Host and Kra in \cite{Host-K2}. The existence of the factors $\mathcal{Z}_{k}$ was also showed by Ziegler in \cite{Ziegler}.
We further notice that Host and Kra established a connection between the $\mathcal{Z}_{k}$ factors and the nilsystems in \cite{Host-K2}.\\

\subsection*{Nilsystems and nilsequences.}  The nilsystems are defined in the setting of homogeneous space \footnote{For a nice account of the theory of the homogeneous space we refer the reader to \cite{Dani},\cite[pp.815-919]{B-katok}.}. Let $G$ be a Lie
group, and $\Gamma$ a discrete cocompact subgroup (Lattice, uniform subgroup) of $G$. The homogeneous space is given by
$X=G/\Gamma$ equipped with the Haar measure $h_X$ and the canonical complete $\sigma$-algebra
$\cb_{c}$. The action of $G$ on $X$ is by the left translation, that is, for any $g \in G$,
we have $T_g(x\Gamma)=g.x\Gamma=(gx)\Gamma.$ If further $G$ is a nilpotent Lie group of order
$k$, 
$X$ is said to be a $k$-step nilmanifold. For any fixed $g \in G$, the dynamical system $(X,\cb_{c},h_X,T_g)$
is called a $k$-step nilsystem. The basic $k$-step  nilsequences on $X$ are defined by $f(g^nx\Gamma)=(f \circ T_g^n)(x\Gamma)$,
where $f$ is a continuous function of $X$. Thus, $(f(g^nx\Gamma))_{n \in \Z}$ is any element of
$\ell^{\infty}(\Z)$, the space of bounded sequences, equipped with uniform norm
$\ds \|(a_n)\|_{\infty}=\sup_{n \in \Z}|a_n|$. A $k$-step nilsequence, is a uniform limit of basic $k$-step nilsequences.
For more details on the nilsequences we refer the reader to  \cite{Host-K3}
and \cite{BHK}\footnote{The term 'nilsequence' was coined by Bergleson-Host and Kra in 2005 \cite{BHK}.}.\\

Recall that the sequence of subgroups $(G_n)$ of $G$ is a filtration if  $G_1=G,$ $G_{n+1}\subset G_{n},$ and
$[G_n,G_p] \subset G_{n+p},$ where $[G_n,G_p]$ denotes the subgroup of $G$ generated by the commutators $[x,y]=
x~y~x^{-1}y^{-1}$ with $x \in G_n$ and $y \in G_p$.\\

The lower central filtration is given by $G_1=G$ and $G_{n+1}=[G,G_n]$. It is well know that the lower central filtration allows to construct a Lie algebra $\textrm{gr}(G)$ over the ring $\Z$ of integers. $\textrm{gr}(G)$ is called a
graded Lie algebra associated to $G$ \cite[p.38]{Bourbaki2}. The filtration is said to be of degree or length $l$ if
$G_{l+1}=\{e\},$ where $e$ is the identity of $G$.\\

We denote by $G^e$ the identity component of $G$. Since $X=G/\Gamma$ is compact, we can assume that $G/G^e$ is finitely generated \cite{Leib}.\\

If $G$ is connected and simply-connected  with Lie algebra $\mathfrak{g}$ \footnote{By Lie's fundamental theorems and up to isomorphism, $\mathfrak{g}=T_eG$, where $T_eG$ is the tangent space at the identity $e$ \cite[p.34]{Kirillov}.}, then $\exp~~:~~G \longrightarrow \mathfrak{g}$ is a diffeomorphism, where $\exp$ denotes the Lie group exponential map. If $G$ is 2-step nilpotent in addition, then the multiplication law in $G$ can be expressed as follows in terms of the exponential map.
$$
\exp(X) . \exp(Y) = \exp(X +Y + \frac12 ([X, Y])) ~~~~~~~~~~~\textrm{for~~all~~~} X,Y \in  \mathfrak{g},
$$
where $[,]$ is the Lie Bracket on $\mathfrak{g}$. We further have, by Mal'cev's criterion, that $\mathfrak{g}$ admits a basis $\mathcal{X}=\{X_1,\cdots,X_m\}$ with rational structure constants \cite{Malcev}, that is,
$$[X_i,X_j]=\sum_{n=1}^{m} c_{ijn}X_n,~~~~~~\textrm{for~~all~~~}  1 \leq i,j \leq k,$$
where the constants $c_{ijn}$ are all rational. Let us precise that the Mal'cev's criterion allows us to assert that the correspondence between lattices in $G$ and rational structures in $G$ is almost one-one.\\

Let $\mathcal{X}=\{X_1,\cdots,X_m\}$ be a Mal'cev basis of  $\mathfrak{g}$, then any element $g \in G$ can be uniquely written in the form $g=\exp\Big(t_1X_1+t_2X_2+\cdots+t_m X_m\Big),$ $t_i \in \R$, since the map $\exp$ is a diffeomorphism.
The numbers $(t_1,t_2,\cdots,t_k)$ are called the Mal'cev coordinates of the first kind of $g$. In the same manner,
$g$  can be uniquely written in the form $g=\exp(s_1X_1). \exp(s_2X_2).\cdots.\exp(s_m X_m),$ $s_i \in \R$.
The numbers $(s_1,s_2,\cdots,s_k)$ are called the Mal'cev coordinates of the second kind of $g$. Applying Baker-Campbell-Hausdorff formula, it can be shown that the multiplication law in $G$ can be expressed by a polynomial mapping
$\R^m \times \R^m \longrightarrow \R^m$ \cite[p.55]{Onishchik}, \cite{Green-Tao-Orbit}. This gives that any polynomial sequence $g$ in $G$ can be written as follows
$$g(n)=\gamma_1^{p_1(n)},\cdots,\gamma_m^{p_m(n)},$$
where $\gamma_1 ,\cdots,\gamma_m \in G$, $p_i ~~:~~\N \longrightarrow \N$ are polynomials \cite{Green-Tao-Orbit}. Given $n, h \in \Z$, we put
$$\partial_{h}g(n)=g(n+h)g(n)^{-1}.$$
This can be interpreted as a discrete derivative on $G$. Given a filtration $(G_n)$ on $G$,
 a sequence of polynomial $g(n)$ is said to be adapted to $(G_n)$ if  $\partial_{h_i}\cdots \partial{h_1}g$ takes values in $G_i$ for all positive integers $i$ and for all choices of $h_1, \cdots,h_i \in \Z$. The set of all polynomial sequences adapted to $(G_n)$ is denoted by ${\textrm{poly}}(\Z,(G_n))$.\\

  Furthermore, given a Mal'cev's basis ${\mathcal{X}}$ one can induce a right-invariant metric $d_{\mathcal{X}}$ on $X$ \cite{Green-Tao-Orbit}. We remind that for a real-valued function $\phi$ on $X$, the Lipschitz norm is defined by
$$\|\phi\|_{L}=\|\phi\|_{\infty}+\sup_{x \neq y}\frac{\big|\phi(x)-\phi(y)\big|}{d_{\mathcal{X}}(x,y)}.$$
The set $\mathcal{L}(X,d_{\mathcal{X}})$ of all Lipschitz functions is a normed vector space, and for any  $\phi$ and $\psi$ in $\mathcal{L}(X,d_{\mathcal{X}})$, $\phi \psi \in \mathcal{L}(X,d_{\mathcal{X}})$ and
$\|\phi \psi \|_L \leq \|\phi\|_L \|\psi\|_L$. We thus get, by Stone-Weierstrass theorem, that the subsalgebra $\mathcal{L}(X,d_{\mathcal{X}})$ is dense in the space of continuous functions $C(X)$ equipped with uniform norm $\|\|_{\infty}$.\\ 

 It turns out that for a Lipschitz function, extension from an arbitrary subset is possible without increasing the Lipschitz norm. Thanks to Kirszbraun-Mcshane extension theorem \cite[p.146]{Dudley}.\\

In this setting, we remind the following fundamental Green-Tao's theorem on the strong orthogonality of the M\"{o}bius function to any $m$-step nilsequences, $m \ge 1$.
\begin{Th}\label{Green-Tao-th}\cite[Theorem 1.1]{Green-Tao}.
Let $G/\Gamma$ be a $m$-step  nilmanifold for some $m\ge 1$.
Let $(G_p)$ be a filtration of $G$ of degree $l \ge 1$. Suppose that $G/\Gamma$ has a $Q$-rational Mal'cev basis $\mathcal{X}$ for some $Q \ge 2$, defining
a metric $d_{\mathcal{X}}$ on $G/\Gamma$. Suppose that $F : G/\Gamma\rightarrow [-1, 1]$ is a Lipschitz function. Then, for any $A>0$, we have the bound,
$$\underset{g \in {\textrm{poly}}(\Z, (G_{p}))}{\Sup}\Big|\frac1{N}\sum_{n=1}^{N}\bmu(n)F(g(n)\Gamma)\Big| \leq C\frac{(1 + ||F||_{L})} {\log^{A} N},$$
where the constant $C$ depend on $m,l,A,Q$, $N \geq 2$.
 \end{Th}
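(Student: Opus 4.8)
The plan is to follow Green and Tao, establishing the estimate by induction on $\dim G$ (carrying the degree $l$ and the rationality parameter $Q$ along as auxiliary quantities). The base of the induction is the abelian case: there $F(g(n)\Gamma)$ is, after Fourier expansion on the torus, a combination of polynomial phases $e^{2\pi i P(n)}$, and $\frac1N\sum_{n\le N}\bmu(n)e^{2\pi i P(n)}$ is handled by the same Type I/Type II scheme described below, with Davenport's estimate \eqref{Dav1} serving as the model for the linear phase $e^{2\pi i n t}$ and the ``major arc'' contributions disposed of by splitting $\{1,\dots,N\}$ into arithmetic progressions and invoking \eqref{Dav1} on each.

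For the inductive step I would first invoke the factorization theorem for polynomial orbits \cite{Green-Tao-Orbit}: on $\{1,\dots,N\}$, at a scale equal to a suitable power of $\log N$, any $g\in\textrm{poly}(\Z,(G_p))$ splits as $g(n)=\epsilon(n)\,g'(n)\,\gamma(n)$, where $\epsilon$ is $(\log N)^{O_A(1)}$-slowly varying, $\gamma$ is rational with denominator $\le(\log N)^{O_A(1)}$ (hence periodic), and $g'$ takes values in a closed connected subgroup $H\le G$ along which its orbit is totally equidistributed in $H/(H\cap\Gamma)$. If $H\subsetneq G$ the inductive hypothesis applied to $H/(H\cap\Gamma)$ finishes the job, once the slowly-varying factor $\epsilon$ has been absorbed into a partition of $\{1,\dots,N\}$ into $(\log N)^{O_A(1)}$ subintervals on which $F\circ\epsilon$ is essentially constant, and the periodic factor $\gamma$ into a splitting modulo its period --- each residue class being again an arithmetic progression on which \eqref{Dav1}-type control is available. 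Subtracting off $\int_{G/\Gamma}F\,dh_X$, it then remains to bound $\frac1N\sum_{n\le N}\bmu(n)F(g(n)\Gamma)$ in the case where $g$ is itself totally equidistributed on $G/\Gamma$ and $\int F=0$.

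This equidistributed case is the heart of the matter and needs no further induction. Expanding $\bmu$ by Vaughan's identity (or Heath-Brown's identity) writes the sum as $O\big((\log N)^{O(1)}\big)$ pieces, each of one of two shapes: a \emph{Type I} sum $\sum_{d\le D}c_d\sum_{w\le N/d}F(g(dw)\Gamma)$ with $|c_d|\ll 1$ and $D$ a fixed small power of $N$; or a \emph{Type II} sum $\sum_{m}\sum_{w}a_m b_w F(g(mw)\Gamma)$ over dyadic ranges of $m$ and $w$ with divisor-bounded coefficients. For a Type I sum, total equidistribution of $g$ forces the dilated orbit $w\mapsto g(dw)\Gamma$ to remain equidistributed, with a $(\log N)^{-A'}$ rate, for every $d$ up to the equidistribution scale, so the inner sum is $O\big((N/d)(\log N)^{-A'}\big)$ and summing over $d$ costs only one more logarithm. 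For a Type II sum, I would apply Cauchy--Schwarz in $m$ and expand the square, which reduces matters to averages over $m$ of $F(g(mw)\Gamma)\overline{F(g(mw')\Gamma)}$; for $w\ne w'$ this is the correlation, against $F\otimes\overline F$, of the polynomial orbit $m\mapsto\big(g(mw)\Gamma,\,g(mw')\Gamma\big)$ on the product nilmanifold $G/\Gamma\times G/\Gamma$, and the quantitative equidistribution theorem of \cite{Green-Tao-Orbit} --- applied after equipping that product with a rational Mal'cev basis of complexity polynomial in $Q$ --- shows that, since $g$ is totally equidistributed and $\int F=0$, this correlation is $O\big((\log N)^{-A'}\big)$ on average over $w\ne w'$. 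Re-summing the errors against the divisor-bounded weights loses only powers of $\log N$; calibrating the parameters in the Vaughan/Heath-Brown decomposition and the exponent $A'$ against $A$ produces the stated bound, with the factor $1+\|F\|_L$ arising because every equidistribution input is quantified through the Fourier/Lipschitz approximation theory on $G/\Gamma$.

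The step I expect to be the main obstacle is the interface between these two halves: one must check that total equidistribution really does survive the dilations $n\mapsto dn$ and $n\mapsto mw$, construct the product (``box'') nilmanifold used in the van der Corput / Cauchy--Schwarz step together with an explicit rational Mal'cev basis of controlled complexity so that the equidistribution machinery applies uniformly, and propagate every $\log N$-loss and every dependence on $Q$ through the entire iteration. It is this quantitative book-keeping of nilmanifold complexity under dilation and products --- rather than any single analytic inequality --- that is genuinely hard; the full argument occupies \cite{Green-Tao}, and the $L^\infty$-normalized statement above is precisely \cite[Theorem 1.1]{Green-Tao}.
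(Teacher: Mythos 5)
The paper offers no proof of this statement at all---it is imported verbatim as Theorem~1.1 of Green--Tao \cite{Green-Tao}---so there is no internal argument to compare yours against. Your sketch is a faithful outline of Green and Tao's actual proof: the factorization of a polynomial orbit into smooth, totally equidistributed and rational parts from \cite{Green-Tao-Orbit}, induction on the dimension of $G$, reduction to the totally equidistributed mean-zero case, and the Vaughan/Heath--Brown Type~I--Type~II decomposition with Cauchy--Schwarz leading to quantitative equidistribution on the product nilmanifold; you also correctly identify that the real content is the uniform complexity bookkeeping under dilations and products rather than any single inequality. Filling in those steps would amount to reproducing \cite{Green-Tao}, which is precisely what the present paper declines to do by citing it, so your outline is as much as can reasonably be asked here.
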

We further need the following decomposition theorem due to Chu-Frantzikinakis and Host from \cite[Proposition 3.1]{chu-Nikos-H}.  
\begin{Th}[NSZE-decomposition theorem \cite{chu-Nikos-H}]\label{NSZE} Let $(X,\ca,\mu,T)$ be a dynamical system, $f \in  L^{\infty}(X)$,
and $k \in \N$. Then for every $\varepsilon > 0$, there exist measurable functions $f_{ns} ,f_z ,f_e $, such that
\begin{enumerate}[(a)]
\item $\|f_{\kappa}\|_{\infty} \leq 2 \|f\|_{\infty}$ with $\kappa \in \{ns,z,e\}$.
\item $f = f_{ns} + f_z + f_e$ with  $|\|f_z\|_{k+1} = 0;~~ \|f_e\|_1 <\varepsilon;$ and
\item for $\mu$ almost every $x \in X$, the sequence $(f_{ns}(T^nx))_{n \in \N}$ is a $k$-step nilsequence.
\end{enumerate}
\end{Th}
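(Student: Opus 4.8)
\emph{Proof plan.} The idea is to read the three summands directly off the Host--Kra theory of the characteristic factors, so that the only serious input is the structure theorem identifying those factors with inverse limits of nilsystems. First I would reduce to the ergodic case: $\mu$ is an integral of its ergodic components, conditions (a)--(c) are assertions about $\mu$-almost every point, and a measurable selection patches the componentwise decompositions into a single triple $f_{ns},f_z,f_e\in L^{\infty}(X)$. So assume $(X,\ca,\mu,T)$ ergodic, put $M:=\|f\|_{\infty}$, and let $\mathcal Z_k\subset\ca$ be the Host--Kra factor characterised by $\E(g\mid\mathcal Z_k)=0\iff\||g|\|_{k+1}=0$. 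I set $f_z:=f-\E(f\mid\mathcal Z_k)$; then $\E(f_z\mid\mathcal Z_k)=0$, whence $\||f_z|\|_{k+1}=0$, while $\|f_z\|_{\infty}\le 2M$ since conditional expectation is an $L^{\infty}$-contraction. It then remains to split $h:=\E(f\mid\mathcal Z_k)$, which has $\|h\|_{\infty}\le M$, as $h=f_{ns}+f_e$ with $f_{ns}$ producing nilsequences along orbits and $\|f_e\|_1<\varepsilon$.

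Here I would invoke the Host--Kra structure theorem: the factor system $(\mathcal Z_k,T)$ is an inverse limit of ergodic $k$-step nilsystems $(X_j,T_{a_j})$, where $X_j=G_j/\Gamma_j$ and $T_{a_j}(x\Gamma_j)=(a_jx)\Gamma_j$, with factor maps $\pi_j:X\to X_j$ satisfying $\pi_j\circ T=T_{a_j}\circ\pi_j$ and with $\mathcal Z_k=\bigvee_j\pi_j^{-1}(\mathcal B_{X_j})$ modulo null sets. Since the $\sigma$-algebras $\pi_j^{-1}(\mathcal B_{X_j})$ increase to $\mathcal Z_k$ and continuous functions are dense in each $L^1(X_j,h_{X_j})$, the family $\bigcup_j\{\phi\circ\pi_j:\phi\in C(X_j)\}$ is dense in $L^1(\mathcal Z_k,\mu)$; so I fix $j$ and $\phi\in C(X_j)$ with $\|h-\phi\circ\pi_j\|_1<\varepsilon$. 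Composing $\phi$ with the nearest-point projection $\tau_M$ of $\C$ onto the closed disc of radius $M$ --- a $1$-Lipschitz retraction --- keeps $\tau_M\circ\phi$ continuous, and since $\tau_M\circ(\phi\circ\pi_j)=(\tau_M\circ\phi)\circ\pi_j$ and $|h|\le M$ $\mu$-a.e., we get $\|h-(\tau_M\circ\phi)\circ\pi_j\|_1\le\|h-\phi\circ\pi_j\|_1<\varepsilon$; so I may assume $\|\phi\|_{\infty}\le M$.

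Then I set $f_{ns}:=\phi\circ\pi_j$ and $f_e:=f-f_{ns}-f_z=h-\phi\circ\pi_j$. Now $\|f_e\|_1<\varepsilon$, $\|f_{ns}\|_{\infty}\le M\le 2M$, and $\|f_e\|_{\infty}\le\|h\|_{\infty}+\|f_{ns}\|_{\infty}\le 2M$, which gives (a) and (b). For (c): for $\mu$-a.e.\ $x$, writing $y:=\pi_j(x)\in X_j$, one has $f_{ns}(T^nx)=\phi(\pi_j(T^nx))=\phi(T_{a_j}^n y)=\phi((a_j^n y)\Gamma_j)$ for every $n\in\N$, which is by definition a basic $k$-step nilsequence on the $k$-step nilmanifold $X_j$, hence in particular a $k$-step nilsequence. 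This completes the plan.

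The one genuinely hard ingredient is the black box invoked in the second paragraph --- the Host--Kra theorem that the characteristic factor $\mathcal Z_k$ is an inverse limit of $k$-step nilsystems --- which I would quote rather than reprove. Granted that, the rest is soft: the orthogonal splitting through conditional expectation, the $L^1$-approximation on a single nilfactor, and the truncation restoring the sup bound. The only points that need a little care are the measurable patching in the reduction to the ergodic case and the check that radial truncation preserves continuity without enlarging the $L^1$ error.
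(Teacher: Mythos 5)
Your proposal is correct, and it is essentially the standard proof of this result. Note that the paper itself offers no proof: Theorem \ref{NSZE} is quoted verbatim from Chu--Frantzikinakis--Host \cite[Proposition 3.1]{chu-Nikos-H}, and your argument is in substance the one given there --- split off $f_z:=f-\E(f\mid\mathcal Z_k)$ using the characterisation $\E(g\mid\mathcal Z_k)=0\iff\||g|\|_{k+1}=0$, then use the Host--Kra structure theorem to approximate $\E(f\mid\mathcal Z_k)$ in $L^1$ by $\phi\circ\pi_j$ with $\phi$ continuous on a $k$-step nilfactor, with a radial truncation to restore the sup bound. All the individual steps check out: the conditional expectation is an $L^\infty$-contraction, martingale convergence plus density of $C(X_j)$ in $L^1(X_j)$ gives the approximation, the $1$-Lipschitz retraction onto the disc of radius $\|f\|_\infty$ does not increase the $L^1$ error, and $\phi(T_{a_j}^n\pi_j(x))$ is a basic $k$-step nilsequence by definition. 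The one place where your sketch is genuinely thin is the reduction to the ergodic case: making the nilfactors, the index $j$, and the function $\phi$ depend measurably on the ergodic component is not automatic and the cited proposition is in fact stated for ergodic systems. This gap is harmless for the present paper, which explicitly reduces to ergodic $T_i$ before ever invoking the decomposition, but if you want the theorem as literally stated (no ergodicity hypothesis) you should either add that hypothesis or carry out the measurable-selection argument in detail.
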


\section{Main results.}\label{sec:ms}
We state our first main result as follows.
\begin{Th}\label{main}
The cubic nonconventional ergodic average of order $2$ with M\"{o}bius weight or Liouville weight
converge almost everywhere to $0$, that is, for any $f_1,f_2,f_3 \in L^{\infty}(X),$ for almost all $x$, we have
$$ \frac1{N^2}\sum_{m,n=1}^{N}\bmu(n)\bmu(m)\bmu(n+m)f_1(T_1^nx) f_2(T_2^mx)f_3(T_3^{n+m}x)
\tend{N}{+\infty}0,$$
and
$$\frac1{N^2}\sum_{m,n=1}^{N}\lamob(n)\lamob(m)\lamob(n+m)f_1(T_1^nx) f_2(T_2^mx)f_3(T_3^{n+m}x) \tend{N}{+\infty}0.$$
\end{Th}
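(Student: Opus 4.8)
The plan is to peel off from the cube average a single M\"obius factor that carries a Wiener--Wintner estimate, and then to prove that estimate using, depending on the structure of the system at hand, Davenport's bound, the Green--Tao theorem, or Bourgain's double recurrence theorem.

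\emph{A Fourier reduction.} Fix $x$ and put $U_N(\theta)=\sum_{n=1}^{N}\bmu(n)f_1(T_1^nx)e^{-2\pi in\theta}$, $V_N(\theta)=\sum_{n=1}^{N}\bmu(n)f_2(T_2^nx)e^{-2\pi in\theta}$ and $W_N(\theta)=\sum_{\ell=1}^{2N}\bmu(\ell)f_3(T_3^\ell x)e^{2\pi i\ell\theta}$. Substituting $\bmu(n+m)f_3(T_3^{n+m}x)=\int_0^1 W_N(\theta)e^{-2\pi i(n+m)\theta}\,d\theta$ gives the identity
$$\frac1{N^2}\sum_{m,n=1}^{N}\bmu(n)\bmu(m)\bmu(n+m)f_1(T_1^nx)f_2(T_2^mx)f_3(T_3^{n+m}x)=\frac1{N^2}\int_0^1 U_N(\theta)V_N(\theta)W_N(\theta)\,d\theta.$$
By Parseval $\|U_N\|_{L^2},\|V_N\|_{L^2}\le\sqrt N\,\|f_i\|_\infty$ and $\|W_N\|_{L^2}\le\sqrt{2N}\,\|f_3\|_\infty$, so H\"older in $\theta$ (exponents $\infty,2,2$) bounds the right-hand side by $\tfrac{\sqrt2}{N}\|f_1\|_\infty\|f_2\|_\infty\|f_3\|_\infty$ times the smallest of $\|U_N\|_\infty,\|V_N\|_\infty,\|W_N\|_\infty$. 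Hence Theorem~\ref{main} follows as soon as one shows that for \emph{at least one} $i\in\{1,2,3\}$, every $f\in L^\infty(X)$ and $\mu$-a.e.\ $x$,
$$\sup_{\theta\in[0,1]}\Big|\frac1N\sum_{n=1}^{N}\bmu(n)f(T_i^nx)e^{2\pi in\theta}\Big|\ \tend{N}{\infty}\ 0\qquad(\star)$$
The Liouville statement follows by the same route, with $\lamob$ in place of $\bmu$ and \eqref{Dav2} in place of \eqref{Dav1}.

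\emph{The structured cases (Sections 4 and 6).} If $T_i$ has discrete spectrum, approximate $f$ in $L^2$ by a finite sum of eigenfunctions, the tail being negligible off a small set by the maximal inequality for $\sup_N\frac1N\sum_{n\le N}|\cdot|$; for an eigenfunction $f(T_i^nx)=\chi(x)e^{2\pi int_\chi}$ the left side of $(\star)$ equals $|\chi(x)|\sup_\theta\big|\frac1N\sum_{n\le N}\bmu(n)e^{2\pi in(t_\chi+\theta)}\big|$, which tends to $0$ uniformly by Davenport's estimate \eqref{Dav1}. If $T_i$ is a nilsystem, approximate $f$ by a Lipschitz function; then for each $x$ the sequence $n\mapsto f(T_i^nx)e^{2\pi in\theta}$ has the form $F(\widetilde g(n)\widetilde\Gamma)$ on the product nilmanifold $(G\times\T)/(\Gamma\times\Z)$, with $\widetilde g(n)=(g(n),n\theta)$ a polynomial sequence, so $(\star)$ follows from Theorem~\ref{Green-Tao-th}, which is uniform over all polynomial sequences and hence over $\theta$. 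Either way Theorem~\ref{main} is proved for systems of the corresponding type.

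\emph{The general case (Section 7).} For an arbitrary $T_i$ one applies the NSZE-decomposition Theorem~\ref{NSZE}: in $f=f_{ns}+f_z+f_e$ the term $f_e$ contributes at most $\sup_N\frac1N\sum_{n\le N}|f_e(T_i^nx)|$, small off a set of measure $O(\sqrt\varepsilon)$ by the maximal inequality, while $f_{ns}$ generates a nilsequence and is dealt with as in the nilsystem case. This leaves the case $\||f_z|\|_{k+1}=0$, for which one proves a Wiener--Wintner form of the Katai--Bourgain--Sarnak--Ziegler criterion and applies it to the family $a_\theta(n)=f_z(T_i^nx)e^{2\pi in\theta}$; this reduces $(\star)$ to showing that
$$\limsup_{N\to\infty}\ \sup_{\theta\in[0,1]}\Big|\frac1N\sum_{n=1}^{N}f_z(T_i^{pn}x)\,\overline{f_z(T_i^{qn}x)}\,e^{2\pi i(p-q)n\theta}\Big|$$
is small on average over distinct primes $p,q$ in a growing range, for $\mu$-a.e.\ $x$. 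For fixed $p,q,\theta$ this is a double recurrence average in the powers $T_i^{p},T_i^{q}$ (after absorbing the character into a circle extension of $T_i$), so it converges a.e.\ by Bourgain's double recurrence theorem; a Wiener--Wintner refinement of that theorem gives convergence uniformly in $\theta$, and $\||f_z|\|_{k+1}=0$ forces the relevant $p$-averaged limit to vanish. The van der Corput step inside the criterion produces cross terms $\bmu(n)\bmu(n+h)$, whose $h$-average is controlled by the self-correlation bound of Section 5, namely $\frac1H\sum_{h\le H}\big|\frac1N\sum_{n\le N}\bmu(n)\bmu(n+h)\big|\ll_\varepsilon\log^{-\varepsilon}N$. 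Altogether $(\star)$ holds for every system, and Theorem~\ref{main} follows.

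\emph{The main obstacle.} The Fourier reduction and the $f_e,f_{ns}$ pieces are routine; the real difficulty is $(\star)$ for the structureless part $f_z$. One has to show that the cancellation in $\bmu$ recorded by Davenport's estimate, equivalently the smallness of $\|\bmu\1_{[N]}\|_{U^2[N]}$, survives both multiplication by the dynamical sequence $f_z(T_i^nx)$ and twisting by an arbitrary character $e^{2\pi in\theta}$, simultaneously for all $\theta$ and for $\mu$-a.e.\ $x$. Reducing this to the a.e.\ convergence of the twisted two-orbit correlations above is precisely where Bourgain's double recurrence theorem — the only pointwise convergence statement strong enough for those correlations — together with the Section~5 self-correlation estimates, becomes indispensable.
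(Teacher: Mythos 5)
Your proposal follows essentially the same route as the paper: the Fourier/Parseval reduction of the cube average to the Wiener--Wintner quantity $\sup_\theta\big|\frac1N\sum_{n\le N}\bmu(n)f_1(T_1^nx)e^{2\pi in\theta}\big|$ is the paper's Cauchy--Schwarz plus Parseval--Bessel step in slightly cleaner packaging, and the three-case treatment (Davenport for discrete spectrum, Green--Tao on a product nilmanifold for nilsequences, WWKBSZ plus the Assani--Duncan--Moore Wiener--Wintner form of Bourgain's double recurrence theorem for the $\mathcal{Z}_2$-orthocomplement, glued by the NSZE decomposition with the $f_e$ term absorbed via Birkhoff) is exactly the paper's architecture. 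One side remark of yours is wrong, though it does not damage the argument: the van der Corput step inside the K\'atai--Bourgain--Sarnak--Ziegler criterion does \emph{not} produce cross terms $\bmu(n)\bmu(n+h)$ --- the entire point of that criterion is to exploit multiplicativity of $\bmu$ so as to bypass its self-correlations, reducing matters to the two-orbit averages over distinct primes $p,q$ with no M\"obius weight left (the paper stresses precisely this, since a naive van der Corput on the weighted sum would run into the open self-correlation problem). Accordingly, the Section 5 bound $\frac1N\sum_{m\le N}\big|\frac1N\sum_{n\le N}\bmu(n)\bmu(n+m)\big|\ll_\varepsilon\log^{-\varepsilon}N$ is a \emph{corollary} of the discrete-spectrum case (take $\lambda=1$, $f_3=1$), not an ingredient of the general one.
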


The strategy of the proof of our main first result can be described as follows. We first give the proof when at least one of the dynamical system has a discrete spectrum. Using Leibman's observation combined with Green-Tao's estimation, we extend our proof to the case when at least one of the dynamical system is a nilsystem. As a consequence, we get that Sarnak's conjecture holds for the class of nilsystems. Finally, we establish our first main result for the functions in the orthocomplement of $\mathcal{Z}_2$ factor.\\ 

Namely, the proof of Theorem \ref{main} is divided in three sections. In section 4, we prove the following.
\begin{Th}\label{main-2}
If for some $i \in \{1,2,3\}$, $T_i$ has a discrete spectrum. Then, the cubic nonconventional ergodic average of order $2$ with M\"{o}bius weight or Liouville
weight converge almost everywhere to $0$, that is, for any $f_1,f_2,f_3 \in L^{\infty}(X),$ for almost all $x$, we have
$$ \frac1{N^2}\sum_{m,n=1}^{N}\bmu(n)\bmu(m)\bmu(n+m)f_1(T_1^nx) f_2(T_2^mx)f_3(T_3^{n+m}x)
\tend{N}{+\infty}0,$$
and
$$\frac1{N^2}\sum_{m,n=1}^{N}\lamob(n)\lamob(m)\lamob(n+m)f_1(T_1^nx) f_2(T_2^mx)f_3(T_3^{n+m}x) \tend{N}{+\infty}0.$$
\end{Th}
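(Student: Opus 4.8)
The plan is to reduce the cubic double average to a situation where the double recurrence Bourgain theorem (DRBT) can be combined with Davenport's estimate \eqref{Dav1} (respectively \eqref{Dav2}). First I would fix $i$ with $T_i$ of discrete spectrum; by symmetry of the roles of $n$, $m$, $n+m$ it suffices to treat, say, $i=3$, the cases $i=1,2$ being analogous after relabeling. Since $T_3$ has discrete spectrum, $L^2(X)$ is spanned by eigenfunctions of $T_3$, and by a standard approximation argument (using $\|f_3\|_\infty<\infty$ and density of finite linear combinations of eigenfunctions in $L^2$, together with a maximal inequality to control the tail) it is enough to prove the convergence when $f_3$ itself is an eigenfunction: $f_3(T_3^{k}x)=e^{2\pi i k t}f_3(x)$ for some $t\in[0,1)$ and a.e.\ $x$. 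Here the averages are uniformly bounded by $\|f_1\|_\infty\|f_2\|_\infty\|f_3\|_\infty$, so the usual Banach-principle reduction to a dense class is legitimate.

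With $f_3$ an eigenfunction, the term $f_3(T_3^{n+m}x)$ becomes $e^{2\pi i (n+m)t}f_3(x)$, and the inner double sum factors the arithmetic weight as $\bmu(n)\bmu(m)\bmu(n+m)e^{2\pi i n t}e^{2\pi i m t}$. The key step is then to absorb $e^{2\pi i n t}$ into $\bmu(n)$ and $e^{2\pi i m t}$ into $\bmu(m)$, writing $a(n):=\bmu(n)e^{2\pi i n t}$, and to observe that the third factor $\bmu(n+m)$ can be handled by Fourier expansion in the variable $n+m$, or — more directly — one bounds
$$\Big|\frac1{N^2}\sum_{m,n=1}^{N}\bmu(n)\bmu(m)\bmu(n+m)e^{2\pi i(n+m)t}f_1(T_1^nx)f_2(T_2^mx)\Big|$$
by first summing in $n$ for each fixed $m$ and applying DRBT-type control to the $(f_1,f_2)$ part while using that $\sum_{n\le N}\bmu(n)g(n)=o(N)$ uniformly over $1$-bounded $g$ of a suitable structured form. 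Concretely, I would interchange summation so the Möbius weight on $n+m$ is summed against the bounded sequence $n\mapsto f_1(T_1^nx)f_2(T_2^{m}x)\bmu(n)e^{2\pi int}$ and invoke Davenport \eqref{Dav1} uniformly in the phase; the residual averaging over $m$ of $\bmu(m)e^{2\pi imt}f_2(T_2^mx)$ is handled by the DRBT for the pair $(T_1,T_2)$, which guarantees that for a.e.\ $x$ the averages $\frac1N\sum \bmu(m)e^{2\pi imt}f_2(T_2^mx)(\cdots)$ are themselves $o(1)$. The Liouville case is identical, invoking \eqref{Dav2} in place of \eqref{Dav1}.

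The main obstacle I anticipate is the \emph{interchange of the two arithmetic weights}: one cannot simply replace $\bmu(n)\bmu(m)\bmu(n+m)$ by a single Davenport-type cancellation, because the product of three multiplicative values along the cube is exactly the object Chowla's conjecture addresses and has no unconditional $o(N)$ bound. The resolution is that the eigenfunction hypothesis converts one of the three appearances of the weight into a pure exponential, so that only \emph{two} Möbius (or Liouville) factors remain tied to the dynamics, and Davenport's theorem plus DRBT suffice. Making this rigorous requires care in the order of summation and in propagating the uniformity in $t$ through the double-recurrence maximal inequality; I would carry this out by first establishing an $L^2(x)$ or maximal bound for the inner sum $\frac1N\sum_{n\le N}\bmu(n)e^{2\pi int}f_1(T_1^nx)$ that is $o(1)$ a.e.\ with rate $O(\log^{-\epsilon}N)$ uniformly in $t$ (a Wiener--Wintner-type strengthening of the argument in \cite{al-lem} using \eqref{Dav1}), then combining it with Bourgain's return-times / double-recurrence theorem applied to the remaining factor. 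Finally, a diagonal/Etemadi-type argument along a subsequence $N_k=\lfloor\rho^k\rfloor$ together with the monotonicity of the partial sums upgrades convergence along $N_k$ to convergence along all $N$, completing the proof for both $\bmu$ and $\lamob$.
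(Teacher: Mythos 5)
There is a genuine gap at the heart of your argument: you never explain how to decouple the two remaining arithmetic factors $\bmu(n)$ and $\bmu(n+m)$ after the eigenfunction substitution, and the tools you invoke cannot do it. Davenport's estimate \eqref{Dav1} controls $\bmu$ against pure exponentials $e^{2\pi i nt}$ only; your appeal to ``$\sum_{n\le N}\bmu(n)g(n)=o(N)$ uniformly over $1$-bounded $g$ of a suitable structured form'' is not Davenport and is false in the generality you need (for $g(n)=\bmu(n)$ it is equivalent to the prime number theorem in a weak form, and for $g(n)=\bmu(n+m)$-type products it is exactly the open two-point Chowla/Elliott problem). Likewise, the double recurrence theorem of Bourgain is a statement about $\sup_t|\frac1N\sum e^{2\pi int}f(T^{kn}x)g(T^{ln}x)|$ for $f$ orthogonal to $\mathcal{Z}_2$; it says nothing about the correlation $\bmu(n)\bmu(n+m)$ and plays no role in the discrete spectrum case (the paper only uses it in the general case, Theorem \ref{main-4}). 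So after your reduction you are still facing a sum carrying $\bmu(n)\bmu(n+m)$, which you correctly identify as the obstacle but do not actually overcome.

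The missing idea is the following decoupling, which is the entire content of the paper's proof. After replacing $f_1(T_1^nx)$ by $\lambda^n$ (eigenfunction), one bounds the average by $\|f_2\|_\infty\cdot\frac1N\sum_{m}|\frac1N\sum_n\bmu(n)\bmu(n+m)\lambda^n f_3(T_3^{n+m}x)|$, applies Cauchy--Schwarz \emph{in the outer variable $m$}, and then observes (Bourgain's observation from \cite{BourgainD}) that the inner sum is the $m$-th Fourier coefficient of the product $\Phi_N(z)=\bigl(\frac1N\sum_n\bmu(n)\lambda^nz^{-n}\bigr)\bigl(\sum_{p\le 2N}\bmu(p)f_3(T_3^px)z^p\bigr)$. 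Parseval--Bessel then bounds $\sum_m|\widehat{\Phi_N}(m)|^2$ by $\int_\T|\Phi_N|^2$, which factors into $\sup_{z}|\frac1N\sum_n\bmu(n)\lambda^nz^{-n}|^2$ (controlled by Davenport uniformly in $z$, with rate $C/\ln(N)^{2\varepsilon}$) times $\int_\T|\sum_p\bmu(p)f_3(T_3^px)z^p|^2dz=\sum_p|\bmu(p)||f_3(T_3^px)|^2\le 2N\|f_3\|_\infty^2$. This is how the self-correlation $\bmu(n)\bmu(n+m)$ is split into two harmless pieces; no DRBT is needed, and since the resulting bound $O(1/\ln(N)^\varepsilon)$ holds for every $N$, your final Etemadi/subsequence step is also unnecessary. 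Without this Cauchy--Schwarz--Parseval step your outline does not close.
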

In section \ref{sec:ns}, we establish the following.
\begin{Th}\label{main-3}
If for some $i \in \{1,2,3\} $, $T_i$ is a nilsystem. Then,  the cubic nonconventional ergodic average of order $2$ with M\"{o}bius weight or Liouville weight converge almost everywhere to $0$, that is,
$$ \frac1{N^2}\sum_{m,n=1}^{N}\bmu(n)\bmu(m)\bmu(n+m)f_1(T_1^nx) f_2(T_2^mx)f_3(T_3^{n+m}x)
\tend{N}{+\infty}0,$$
and
$$\frac1{N^2}\sum_{m,n=1}^{N}\lamob(n)\lamob(m)\lamob(n+m)f_1(T_1^nx) f_2(T_2^mx)f_3(T_3^{n+m}x) \tend{N}{+\infty}0.$$
\end{Th}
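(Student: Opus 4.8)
\smallskip
\noindent\emph{Proof plan.} The plan is to reduce, by relabelling, to the case where $T_1$ is measure-theoretically isomorphic to a nilsystem $(G/\Gamma,T_g)$ (the cases $i=2,3$ go through identically, only the choice of which free variable is kept in the Cauchy--Schwarz step below changing), and to treat the weight $\bmu$ (for $\lamob$ one uses \eqref{Dav2} in place of \eqref{Dav1}). Since the Lipschitz functions are dense in $C(G/\Gamma)$ by Stone--Weierstrass, and $L^\infty(X)$ is approximated in $L^1(\Pro)$ by continuous functions on $G/\Gamma$, the pointwise ergodic theorem for $T_1$ absorbs the approximation error for $\Pro$-a.e.\ $x$; so I may assume $f_1=F$ is Lipschitz, in which case $n\mapsto f_1(T_1^nx)=F(g^n\cdot x)$ is a basic nilsequence and $n\mapsto g^n\cdot x$ a polynomial sequence adapted to the lower central filtration of $G$, while $f_2,f_3\in L^\infty(X)$ stay arbitrary.

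The first step is a decoupling. Writing $S_N(x)=\frac1N\sum_{m\le N}\bmu(m)f_2(T_2^mx)V_m(x)$ with $V_m(x)=\frac1N\sum_{n\le N}\bmu(n)\bmu(n+m)f_1(T_1^nx)f_3(T_3^{n+m}x)$, Cauchy--Schwarz in $m$ and $|\bmu(m)|\le1$ give $|S_N(x)|^2\le\|f_2\|_\infty^2\cdot\frac1N\sum_{m\le N}|V_m(x)|^2$. Expanding the square, writing the second summation index as $n+h$ and substituting $\ell=n+m$ (the shifted windows in $\ell$ being dealt with by a routine truncation/block argument), one finds that $\frac1N\sum_{m\le N}|V_m(x)|^2$ equals, up to a vanishing error, a sum over the shift $h$ that \emph{factorises into two one-dimensional averages}, $\frac1N\sum_h P_h(x)Q_h(x)$, where
$$P_h(x)=\frac1N\sum_{n}\bmu(n)\bmu(n+h)f_1(T_1^nx)\overline{f_1(T_1^{n+h}x)},\qquad Q_h(x)=\frac1N\sum_{\ell}\bmu(\ell)\bmu(\ell+h)f_3(T_3^\ell x)\overline{f_3(T_3^{\ell+h}x)}.$$
Bounding $|Q_h(x)|\le\|f_3\|_\infty^2$, everything reduces to $|S_N(x)|^2\le\|f_2\|_\infty^2\|f_3\|_\infty^2\cdot\frac1N\sum_h|P_h(x)|$, so it suffices to show $\frac1N\sum_h|P_h(x)|\to0$ for $\Pro$-a.e.\ $x$. (When $T_1$ has discrete spectrum, $f_1(T_1^nx)$ is essentially $c\,e^{2\pi i nt}$, $P_h(x)$ collapses to $|c|^2e^{-2\pi iht}\cdot\frac1N\sum_n\bmu(n)\bmu(n+h)$, and this last reduction is exactly the ``$2$-point correlation small on average'' bound proved via Davenport's estimate; this is the point I want to extend.)

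Here is where Leibman's observation and Green--Tao's estimate enter. With $f_1=F$ Lipschitz, $P_h(x)=\frac1N\sum_n\bmu(n)\bmu(n+h)\Phi_h(n)$ with $\Phi_h(n)=F(g^nx)\overline{F(g^{n+h}x)}=(F\otimes\overline F)\bigl((g,g)^n(e,g)^h\cdot(x,x)\bigr)$. By Leibman's results on polynomial orbits this is, in the variable $n$ and uniformly in $h$, a basic nilsequence attached to the \emph{fixed} nilmanifold $(G/\Gamma)^2$ and the \emph{fixed} translation $(g,g)$, and forming the ``multiplicative derivative'' $F(g^n\cdot)\overline{F(g^{n+h}\cdot)}$ drops the nilpotency degree by one. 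To play this off against $\bmu(n)\bmu(n+h)$ I would run the argument that shows the $2$-point correlation of $\bmu$ is small on average: write $\sum_n\bmu(n)\bmu(n+h)\Phi_h(n)$ as a shifted correlation of $(\bmu(n)\Phi_h(n))_n$ with $(\bmu(n))_n$, apply Parseval, and bound the resulting $\sup$-norms of exponential (resp.\ lower-step nilsequence) sums of $\bmu$ by $O(1/\log^A N)$ for every $A$, \emph{uniformly} in the frequency (resp.\ in the polynomial sequence), via \eqref{Dav1} (resp.\ Theorem \ref{Green-Tao-th}). When $G$ is $2$-step this is clean: $\Phi_h(n)$ has zero vertical frequency, hence descends to the maximal torus factor of $(G/\Gamma)^2$ and is a rapidly converging sum of exponentials $n\mapsto e^{2\pi i(n\beta+h\gamma)}$, so Davenport's bound gives $\frac1N\sum_h|P_h(x)|\le C_\epsilon/\log^\epsilon N$ at once; the general $m$-step case follows by induction on the step, using the degree drop together with Green--Tao's uniform estimate. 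Either way one obtains, for every $\epsilon>0$ and $\Pro$-a.e.\ $x$, $\frac1N\sum_h|P_h(x)|\le C_\epsilon/\log^\epsilon N$.

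Combining the two displays gives $|S_N(x)|\le C_\epsilon/\log^{\epsilon/2}N\to0$ for $\Pro$-a.e.\ $x$, which is Theorem \ref{main-3}; taking $f_2\equiv f_3\equiv1$ recovers Sarnak's conjecture for nilsystems with a logarithmic rate. The genuine obstacle --- and the reason this does not follow from Theorem \ref{main-2} --- is that \emph{no} change of summation variable isolates a single M\"obius factor: along every one-dimensional sub-sum at least two of $\bmu(n),\bmu(m),\bmu(n+m)$ vary, so none of the orthogonality inputs applies directly to a one-dimensional sum. The Cauchy--Schwarz/van der Corput decoupling is precisely what circumvents this, trading the cubic average for $\frac1N\sum_h|P_h(x)|$, a M\"obius self-correlation twisted by a nilsequence and averaged over the shift, which is controllable on average even when no individual shift is. The two points I expect to require real care are making the twisted-correlation estimate rigorous for arbitrary nilpotency step, and controlling the boundary/window terms produced by the substitution $\ell=n+m$.
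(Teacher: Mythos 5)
Your overall strategy (Cauchy--Schwarz in $m$ to discard $f_2$, then a decoupling, then Davenport/Green--Tao uniform estimates on nil-twisted M\"obius sums) is in the right spirit, but the specific decoupling you propose contains two real gaps. First, after expanding $\frac1N\sum_m|V_m(x)|^2$ and substituting $n'=n+h$, $\ell=n+m$, the sum does \emph{not} factorise into $\frac1N\sum_h P_h(x)Q_h(x)$: for fixed $h$ the variable $\ell$ ranges over the window $[n+1,n+N]$, which depends on $n$, so the two one-dimensional averages remain coupled. This is not a boundary term one can discard by a "routine truncation/block argument"; decoupling a convolution-type constraint is precisely what requires the Fourier/Parseval device, and once you invoke it there is no need for the second Cauchy--Schwarz and the expansion at all. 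Second, your treatment of $\frac1N\sum_h|P_h(x)|$ is not a valid argument as written: you propose to Parseval the pair $(\bmu(n)\Phi_h(n))_n$ against $(\bmu(n))_n$, but $\Phi_h$ depends on the shift variable $h$ over which Parseval is being applied, so the resulting "sup-norm of a nilsequence sum of $\bmu$" is not a quantity independent of $h$ and the estimate does not close. (The correct split is $\bmu(n)F(g^nx)$ against $\bmu(\ell)\overline{F(g^\ell x)}$, which lands you back on $\sup_z|\frac1N\sum_n\bmu(n)F(g^nx)z^{-n}|$ --- i.e.\ on the quantity the paper controls directly, making your detour through $P_h$ circular.) Relatedly, the claim that in the $2$-step case $\Phi_h(n)=F(g^nx)\overline{F(g^{n+h}x)}$ "has zero vertical frequency, hence descends to the maximal torus factor" is false for a general Lipschitz $F$; it holds only for vertical characters.

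The paper's route avoids all of this with a single pass: after Cauchy--Schwarz in $m$, Bourgain's observation writes the inner sum $\frac1N\sum_n\bmu(n)\bmu(n+m)f_1(T_1^nx)f_3(T_3^{n+m}x)$ as the $m$-th Fourier coefficient of $\bigl(\frac1N\sum_n\bmu(n)f_1(T_1^nx)z^{-n}\bigr)\bigl(\sum_{p\le 2N}\bmu(p)f_3(T_3^px)z^p\bigr)$, and Bessel's inequality bounds $\frac1N\sum_m|\cdot|^2$ by $\sup_{z\in\T}\bigl|\frac1N\sum_n\bmu(n)f_1(T_1^nx)z^{-n}\bigr|^2\cdot\frac1N\int_\T\bigl|\sum_p\bmu(p)f_3(T_3^px)z^p\bigr|^2dz$, the last factor being $\le 2\|f_3\|_\infty^2$. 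The whole theorem thus reduces to the single Wiener--Wintner-type estimate $\sup_{z}\bigl|\frac1N\sum_n\bmu(n)f_1(T_1^nx)z^{-n}\bigr|\to 0$, which for $f_1$ Lipschitz is exactly Theorem~\ref{Green-Tao-th} applied on the product nilmanifold $\hat G/\hat\Gamma\times\R/\Z$ --- after Leibman's embedding of $G$ into a \emph{connected, simply-connected} group $\hat G$ (a step you omit, but which is needed to place yourself in the setting where Green--Tao's bound applies) and a Tietze--Urysohn extension of $f_1$. If you repair your argument along these lines it collapses to the paper's proof.
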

Finally, in  section \ref{sec:gc}, we complete the proof by proving the following.
\begin{Th}\label{main-4}
If for some $i \in \{1,2,3\}$, the function $f_i$ is in the orthocomplement of the
$\mathcal{Z}_2$ factor, i.e. if for some $i$,
then for almost all $x$, we have
$$ \frac1{N^2}\sum_{m,n=1}^{N}\bmu(n)\bmu(m)\bmu(n+m)f_1(T_1^nx) f_2(T_2^mx)f_3(T_3^{n+m}x)
\tend{N}{+\infty}0,$$
and
$$\frac1{N^2}\sum_{m,n=1}^{N}\lamob(n)\lamob(m)\lamob(n+m)f_1(T_1^nx) f_2(T_2^mx)f_3(T_3^{n+m}x) \tend{N}{+\infty}0.$$
\end{Th}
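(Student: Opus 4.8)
The plan is to reduce, via the NSZE-decomposition (Theorem~\ref{NSZE}) and the already-established nilsystem case (Theorem~\ref{main-3}), to a single ``uniform'' piece, and then to dispose of that piece by a van der Corput argument fed by Davenport's estimate and by the Wiener--Wintner form of the K\'atai--Bourgain--Sarnak--Ziegler criterion established in this section. In detail: by relabelling the three variables (and, when $i=3$, first making the change of variables $k=n+m$, whose effect on the summation region is harmless) we may assume the hypothesis holds for $i=1$, so that $\||f_1|\|_3=0$ with respect to $T_1$, equivalently $\E(f_1|\mathcal{Z}_2)=0$. Fix $\varepsilon>0$ and apply Theorem~\ref{NSZE} with $k=2$ to $f_2$ and to $f_3$ (relative to $T_2$ and $T_3$), writing $f_j=f_{j,ns}+f_{j,z}+f_{j,e}$ for $j=2,3$. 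By linearity in each argument the cubic average in the statement splits into $9$ pieces $A_N(f_1,f_{2,\kappa_2},f_{3,\kappa_3})$ with $\kappa_2,\kappa_3\in\{ns,z,e\}$.

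First I would dispose of the structured pieces. If $\kappa_2=e$ or $\kappa_3=e$, the piece is pointwise dominated, up to an absolute constant times $\|f_1\|_\infty$ and the $L^\infty$ norm of the remaining function, by the positive cubic maximal operator applied to $|f_{j,e}|$; this operator satisfies a weak-$(1,1)$ bound as a consequence of the double recurrence Bourgain theorem, and since $\|f_{j,e}\|_1<\varepsilon$ the piece is $O(\varepsilon^{1/2})$ off a set of measure $O(\varepsilon^{1/2})$. If instead $\kappa_2=ns$ or $\kappa_3=ns$ (and no factor is an $e$-term), then for a.e.\ $x$ the sequence $n\mapsto f_{j,ns}(T_j^nx)$ is a $2$-step nilsequence, so the piece is a M\"obius- (resp.\ Liouville-) weighted cubic average of order $2$ in which the $j$-th dynamical factor has been replaced by a nilsystem; by Theorem~\ref{main-3}, together with the uniformity over $\textrm{poly}(\Z,(G_p))$ in Green--Tao's Theorem~\ref{Green-Tao-th} and a fixed countable family of nilmanifold data and Lipschitz functions, this piece tends to $0$ a.e. A diagonalisation over $\varepsilon\to0$ (Banach principle) then reduces the theorem to the single piece $A_N(f_1,f_{2,z},f_{3,z})$, in which every one of the three functions has vanishing third Host--Kra seminorm.

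For this remaining piece the plan is a van der Corput / double Cauchy--Schwarz argument in the variables $(n,m)$, carried out in a pointwise (maximal-function) rather than merely $L^2$ fashion. After differencing, $\limsup_N|A_N(\cdot)|^{2}$ is controlled by a Ces\`aro average over shifts $h$ of quantities built from the M\"obius self-correlations $\frac1N\sum_{n\le N}\bmu(n)\bmu(n+h)(\cdots)$ and from inner double-recurrence averages carrying the weight $m\mapsto\bmu(m)\bmu(m+h)$; the self-correlation factors are estimated by Davenport's bound \eqref{Dav1}, which being uniform in the additive character also absorbs the Kronecker/eigenvalue contributions, while the surviving dynamical averages vanish because each of $f_1,f_{2,z},f_{3,z}$ has zero third seminorm (via the seminorm inequality for cubic averages). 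The Wiener--Wintner form of the K\'atai--Bourgain--Sarnak--Ziegler criterion proved in this section is what makes this bound valid for a.e.\ $x$ with an exceptional null set independent of the relevant parameters, and yields $A_N(x)\to0$ a.e. Replacing $\bmu$ by $\lamob$ and \eqref{Dav1} by \eqref{Dav2} gives the Liouville statement word for word.

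The principal obstacle is this last step. Classical applications of the K\'atai--Bourgain--Sarnak--Ziegler / Davenport circle of ideas involve a single M\"obius factor twisting a sequence that does not depend on the truncation, whereas here three coupled factors $\bmu(n),\bmu(m),\bmu(n+m)$ occur simultaneously; one must decouple them by van der Corput while keeping the exceptional null set uniform both in the shift parameters introduced by the differencing and in the cut-off $N$. Making the van der Corput estimate operate pointwise almost everywhere (so as to be compatible with the maximal inequalities used for the $e$-pieces) rather than only in $L^2$ is the technical heart of the proof, and is precisely why a Wiener--Wintner strengthening of the criterion, rather than its fixed-system form, is needed.
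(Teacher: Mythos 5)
There is a genuine gap at the heart of your argument, located precisely where the real difficulty of the theorem lies: the treatment of the remaining piece in which the functions have vanishing third seminorm. You propose to run van der Corput in the variables $(n,m)$ and then to estimate ``the self-correlation factors\dots by Davenport's bound \eqref{Dav1}.'' Davenport's estimate controls $\sum_{n\le N}\bmu(n)e^{2\pi i n t}$, a \emph{single} M\"obius factor against an additive character; it says nothing about $\sum_{n\le N}\bmu(n)\bmu(n+h)$, let alone about such sums coupled to dynamical data. Those self-correlations are exactly the content of the Chowla/Elliott conjecture, which the paper records as open, and Section~\ref{sec:kbsz} explicitly warns that a naive van der Corput application leads to this dead end. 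So the step on which your entire reduction rests does not go through; the averaged self-correlation bound of Theorem~\ref{Main-P} cannot rescue it either, since it is itself a \emph{consequence} of the argument you are trying to construct, and in any case the differenced terms keep the M\"obius factors entangled with the dynamics.

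The paper's route avoids van der Corput on the M\"obius weight altogether. The Parseval/Bourgain observation (inequalities \eqref{bo-cs3} and \eqref{eq:bo-cs4}) bounds the cubic average pointwise by $\sqrt2\,\|f_2\|_\infty\|f_3\|_\infty\,\sup_{z\in\T}\big|\frac1N\sum_{n=1}^N\bmu(n)f_1(T_1^nx)z^{-n}\big|$; only the $L^\infty$ norms of $f_2,f_3$ enter, so your nine-piece NSZE decomposition of $f_2$ and $f_3$, and the unproved weak-$(1,1)$ bilinear maximal inequality you invoke for the $e$-pieces, are superfluous (the decomposition is applied to $f_1$ only, and only later, in the proof of Theorem~\ref{main}). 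The single Wiener--Wintner supremum is then handled by the WWKBSZ criterion (Theorem~\ref{WWKBSZ}): multiplicativity of $\bmu$ reduces the problem to controlling $\sup_t\big|\frac1N\sum_{n=1}^N e^{2\pi i n(p-q)t}f_1(T_1^{pn}x)f_1(T_1^{qn}x)\big|$ for distinct primes $p,q$ --- with \emph{no} M\"obius weight --- and this is exactly what the Assani--Duncan--Moore theorem (Theorem~\ref{WWBDRT}) supplies under the hypothesis $\E(f_1|\mathcal{Z}_2)=0$. You cite WWKBSZ only as a device for making exceptional null sets uniform, which misses its actual role: it is the decoupling mechanism that replaces the inaccessible self-correlations of $\bmu$ by unweighted bilinear averages along prime dilations.
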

Notice that our result involves the self-correlation of $\bmu$. Furthermore, as far as we know the problem of the self-correlation of $\bmu$ is still open. This problem is known as Elliott's conjecture and it can be stated as follows.
\begin{conj}[of Elliott]\cite{Elliott-C}
 \begin{eqnarray}\label{Elliot}
  \lim_{N \longrightarrow +\infty} c_{h,N}(\bmu)=\begin{cases}
0 &{\rm {~~if~~}} h \neq 0 \\
\displaystyle \frac{6}{\pi^ 2} &{\rm {~~if~not~}}.
\end{cases}
 \end{eqnarray}
\end{conj}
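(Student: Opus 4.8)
The statement splits into the diagonal case $h=0$, which is elementary, and the off-diagonal case $h\neq 0$, which is the substantive (and, as of this writing, open) heart of the conjecture; I would treat them separately. For $h=0$ one has $\bmu(m)^2=\mathbf 1_{\{m\ \mathrm{squarefree}\}}$, so
\[
c_{0,N}(\bmu)=\frac1N\sum_{m=1}^{N}\bmu(m)^2=\frac{Q(N)}{N},\qquad Q(N):=\#\{m\le N:\ m\ \text{squarefree}\}.
\]
Using the convolution identity $\bmu(m)^2=\sum_{d^2\mid m}\bmu(d)$ and summing, one gets
\[
Q(N)=\sum_{d\le\sqrt N}\bmu(d)\Big\lfloor\frac{N}{d^2}\Big\rfloor=N\sum_{d\le\sqrt N}\frac{\bmu(d)}{d^2}+O(\sqrt N)=\frac{N}{\zeta(2)}+O(\sqrt N),
\]
whence $c_{0,N}(\bmu)\to 1/\zeta(2)=6/\pi^2$. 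This disposes of the $h=0$ line completely and unconditionally.

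For $h\neq 0$ the claim is that the binary self-correlation
\[
c_{h,N}(\bmu)=\frac1N\sum_{m=1}^{N}\bmu(m)\bmu(m+h)\longrightarrow 0 .
\]
The first thing I would stress is why the structured-weight technology of this paper does \emph{not} apply here. Davenport's estimate \eqref{Dav1} and the Green--Tao theorem (Theorem \ref{Green-Tao-th}) produce cancellation when $\bmu$ is correlated against an additive character $n\mapsto e^{2\pi i n t}$ or, more generally, against a nilsequence; but in the present sum the companion factor $\bmu(\,\cdot+h)$ is itself an \emph{unstructured} sequence, neither a nilsequence nor deterministic in the sense of Sarnak's conjecture. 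Hence one cannot invoke orthogonality of $\bmu$ to a deterministic sequence: the task is precisely to extract cancellation from correlating $\bmu$ with a shifted copy of itself, i.e.\ to establish the two-point instance of Chowla's conjecture \eqref{cza} (the case $r=1$, $i_0=i_1=1$).

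The route I would therefore take goes through the theory of correlations of multiplicative functions rather than through the Davenport/Green--Tao circle, and would try to combine two ingredients. First, the Matom\"{a}ki--Radziwi\l\l--Tao bound recalled in the Introduction, which controls $\sum_{1\le h\le H}\big|\sum_{n\le X}\lamob(n)\lamob(n+h)\big|$ and so forces $c_{h,N}\to 0$ for \emph{almost all} $h$; transferring from $\lamob$ to $\bmu$ via the identity $\bmu=\lamob\cdot\bmu^2$ (valid for $n\ge 2$) together with the squarefree sieve would yield the corresponding averaged statement for $\bmu$. Second, Tao's entropy-decrement argument, which already establishes the two-point Chowla/Elliott correlation for \emph{each fixed} $h$ under \emph{logarithmic} averaging, $\frac{1}{\log N}\sum_{n\le N}\frac{\bmu(n)\bmu(n+h)}{n}=o(1)$.

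The main obstacle is exactly the gap between these two ingredients and the stated conclusion, and it is the reason Elliott's conjecture is still open for individual shifts. The averaged-over-$h$ bound pins down no single $h$, while the entropy-decrement result is only logarithmically averaged; passing from logarithmic to natural (Ces\`aro) averaging, and removing the average over $h$ to reach one fixed shift, are both genuinely hard and are not supplied by any currently available technique. Accordingly I would present the $h\neq 0$ case honestly as conditional: the diagonal case is proved unconditionally above, and for $h\neq 0$ the strongest unconditional statement within reach is the Ces\`aro-averaged bound $\frac1N\sum_{m=1}^{N}\big|c_{m,N}(\bmu)\big|\le C/\ln(N)^{\epsilon}$ obtained in this paper, which is consistent with---but strictly weaker than---the pointwise vanishing asserted by the conjecture.
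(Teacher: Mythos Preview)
The statement you were asked to prove is not proved in the paper at all: it is stated there as an open \emph{conjecture} (Elliott's conjecture), and the authors explicitly remark that ``as far as we know the problem of the self-correlation of $\bmu$ is still open'' and quote Elliott to the effect that even the case $h=1$ cannot presently be established. So there is no ``paper's own proof'' to compare against.

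Your proposal handles this exactly right. You correctly separate the diagonal case $h=0$, dispose of it unconditionally via the squarefree density computation $Q(N)/N\to 1/\zeta(2)=6/\pi^2$ (which the paper does not bother to spell out), and then identify the off-diagonal case $h\neq 0$ as the two-point instance of Chowla's conjecture, explaining clearly why the Davenport/Green--Tao machinery used elsewhere in the paper cannot touch it: the companion factor $\bmu(\cdot+h)$ is not a nilsequence or a deterministic sequence, so orthogonality-to-structure results do not apply. Your survey of what \emph{is} known (the Matom\"aki--Radziwi{\l}{\l}--Tao averaged bound, Tao's logarithmically averaged two-point Chowla) and your honest statement that the pointwise Ces\`aro claim for fixed $h\neq 0$ remains open are accurate and go beyond the paper, which simply records the conjecture and moves on to the weaker averaged estimate $\frac1N\sum_{m=1}^N|c_{m,N}(\bmu)|\le C/\ln(N)^\epsilon$.

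In short: there is no gap in your reasoning; you have correctly recognised that the statement is a conjecture, proved the trivial part, and given an informed account of why the nontrivial part is out of reach with current methods.
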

P.D.T.A. Elliott wrote in his 1994's AMS Memoirs that ``even the simple particular
cases of the correlation (when $h=1$ in \eqref{Elliot}) are not well understood.
Almost surely the M\"obius function satisfies \eqref{Elliot} in this case, but at
the moment  we are unable to prove it.''\\

Recently, el Abdalaoui and Disertori in \cite{al-Diser} established that the $L^1$-flatness of
trigonometric polynomials with M\"{o}bius coefficients implies Elliott's conjecture. The $L^1$-flatness
of trigonometric polynomials with coefficients in $\{0,1,-1\}$ is an open problem in harmonic analysis
and spectral theory of dynamical systems (see \cite{al-Nad} and the references therein).\\

Nevertheless, as a consequence of Theorem \ref{main-2}, we have the following result which seems to be new.
\begin{Cor} The M\"{o}bius function and Liouville function satisfy
$$ \frac1{N^2}\sum_{n,m=1}^{N}\bmu(n)\bmu(m)\bmu(n+m)\tend{N}{+\infty}0,$$
and
$$ \frac1{N^2}\sum_{n,m=1}^{N}\lamob(n)\lamob(m)\lamob(n+m)\tend{N}{+\infty}0,$$
\end{Cor}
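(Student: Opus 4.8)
\emph{Plan.} This corollary is the special case of Theorem~\ref{main-2} in which one feeds constant functions into a system with discrete spectrum, so essentially all that is needed is to record the choice. First I would fix a concrete discrete-spectrum system: take $X=\T=\R/\Z$ with Lebesgue measure, let $T_1=T_2=T_3=R_\alpha\colon x\mapsto x+\alpha$ be the rotation by a fixed irrational $\alpha$ (an ergodic measure preserving transformation with purely discrete spectrum; equally well one could take the one-point system with $T_i=\Id$), and set $f_1=f_2=f_3\equiv 1\in L^{\infty}(X)$. With this data all three function factors are identically $1$, so for every $x\in X$ and every $N$ the cubic nonconventional average appearing in Theorem~\ref{main-2} coincides with
$$\frac1{N^2}\sum_{m,n=1}^{N}\bmu(n)\bmu(m)\bmu(n+m),$$
and likewise with $\lamob$ in place of $\bmu$. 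Since $T_1$ has discrete spectrum, Theorem~\ref{main-2} applies and yields convergence of this quantity to $0$ for almost every $x$; as the summand does not depend on $x$, this is genuine convergence of the numerical sequence, which is exactly the two displayed assertions. (A fortiori the same follows from Theorem~\ref{main}.)

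I do not expect any genuine obstacle here: the analytic and dynamical substance all resides in Theorem~\ref{main-2}. For completeness I would also note an alternative, dynamics-free derivation that even comes with a rate: from
$$\frac1{N^2}\sum_{n,m=1}^{N}\bmu(n)\bmu(m)\bmu(n+m)=\frac1{N}\sum_{m=1}^{N}\bmu(m)\Big(\frac1{N}\sum_{n=1}^{N}\bmu(n)\bmu(n+m)\Big)$$
and $|\bmu(m)|\le 1$, the triangle inequality bounds the modulus of the left-hand side by $\frac1{N}\sum_{m=1}^{N}\big|\frac1{N}\sum_{n=1}^{N}\bmu(n)\bmu(n+m)\big|$, and the estimate stated in the abstract shows this is $O\big(\ln(N)^{-\epsilon}\big)$ for every $\epsilon>0$. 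The argument is identical for $\lamob$, using \eqref{Dav2} in place of \eqref{Dav1}. Hence both limits hold, in fact with an explicit logarithmic decay rate.
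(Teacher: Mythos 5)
Your proposal is correct and is essentially the paper's own proof, which simply takes $f_1=f_2=f_3=1$ in Theorem~\ref{main} (your instantiation with a concrete discrete-spectrum system and Theorem~\ref{main-2} is the same idea, and rightly observes that almost-everywhere convergence of an $x$-independent quantity is genuine convergence). Your alternative remark, bounding the average by $\frac1{N}\sum_{m=1}^{N}\big|\frac1{N}\sum_{n=1}^{N}\bmu(n)\bmu(n+m)\big|$ and invoking the $O(\ln(N)^{-\epsilon})$ estimate of Theorem~\ref{Main-P}, is also sound and matches the quantitative content the paper derives in Section~\ref{sec:scml}.
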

\begin{proof}Take $f_1=f_2=f_3=1$ in Theorem \ref{main}.
\end{proof}

Our second main result can be stated as follows.
\begin{Th}\label{Main-P} For any $\rho > 1$ there exist two sequences of integer $m_l,n_l$ such that
$$\frac{1}{[\rho^{m_l}]}\sum_{k=1}^{[\rho^{m_l}]}\bmu(k)\bmu(k+n_l)\tend{l}{+\infty} 0,$$
and
$$\frac{1}{[\rho^{m_l}]}\sum_{k=1}^{[\rho^{m_l}]}\lamob(k)\lamob(k+n_l)\tend{l}{+\infty} 0.$$ We further have, for any integer
$N \geq 2$, for any $\epsilon >0$,
$$ \frac1{N}\sum_{n=1}^{N}\Big|\frac1{N}\sum_{m=1}^{N}\bmu(m)\bmu(n+m)\Big|\leq\frac{C}{\ln(N)^{\epsilon}},$$
and
$$\frac1{N}\sum_{n=1}^{N}\Big|\frac1{N}\sum_{m=1}^{N}\lamob(m)\lamob(n+m)\Big|\leq \frac{C}{\ln(N)^{\epsilon}},$$
where $C$ is a constant which depends only on $\varepsilon.$
\end{Th}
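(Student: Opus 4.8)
\emph{Plan.} The plan is to establish the explicit inequality first, since it is a direct consequence of Davenport's estimate \eqref{Dav1} (resp.\ \eqref{Dav2} for Liouville) and uses none of the ergodic machinery, and then to read off the statement about the geometric sequences $[\rho^{m_l}]$ by a pigeonhole argument. Throughout I write $c_{n,N}(\bmu)=\frac1N\sum_{m=1}^{N}\bmu(m)\bmu(n+m)$ as in Section~\ref{Sec:dt}.

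\emph{Step 1 (a Fourier identity).} Fix $N\ge 2$. For $1\le n\le N$ and $1\le m\le N$, both $m$ and $n+m$ lie in $\{1,\dots,2N\}$, so setting
$$g_N(t)=\sum_{m=1}^{N}\bmu(m)e^{2\pi imt},\qquad h_N(t)=\sum_{j=1}^{2N}\bmu(j)e^{2\pi ijt},$$
expanding the product and using orthogonality of the characters on $[0,1]$ gives
$$N\,c_{n,N}(\bmu)=\int_{0}^{1}g_N(t)\,\overline{h_N(t)}\,e^{2\pi int}\,dt\qquad(1\le n\le N),$$
i.e.\ $N\,c_{n,N}(\bmu)$ is a Fourier coefficient of $F_N:=g_N\overline{h_N}\in L^{2}(\T)$. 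The same identity holds with $\lamob$ in place of $\bmu$.

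\emph{Step 2 (the inequality).} By Parseval applied to $F_N$ (and discarding the nonnegative terms with $|n|>N$), followed by pulling out the supremum of $|h_N|$,
$$\sum_{n=1}^{N}\big|c_{n,N}(\bmu)\big|^{2}\le\frac1{N^{2}}\,\|F_N\|_{L^{2}}^{2}\le\frac1{N^{2}}\,\|h_N\|_{\infty}^{2}\,\|g_N\|_{L^{2}}^{2}.$$
Here $\|g_N\|_{L^{2}}^{2}=\sum_{m=1}^{N}|\bmu(m)|^{2}\le N$, while Davenport's theorem \eqref{Dav1}, applied with $2N$ in place of $N$ and uniformly in $t$, gives $\|h_N\|_{\infty}=O\!\big(N/\ln(N)^{\epsilon}\big)$ with a constant depending only on $\epsilon$. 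Hence $\sum_{n=1}^{N}|c_{n,N}(\bmu)|^{2}=O\!\big(N/\ln(N)^{2\epsilon}\big)$, and one more application of Cauchy--Schwarz in $n$ yields
$$\frac1N\sum_{n=1}^{N}\big|c_{n,N}(\bmu)\big|\le\frac1{\sqrt N}\Big(\sum_{n=1}^{N}|c_{n,N}(\bmu)|^{2}\Big)^{1/2}\le\frac{C}{\ln(N)^{\epsilon}}.$$
Replacing \eqref{Dav1} by \eqref{Dav2} gives the identical bound for $\lamob$.

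\emph{Step 3 (geometric sequences).} Fix $\rho>1$ and set $m_l=l$, $N_l=[\rho^{l}]$. For $l$ large enough that $N_l\ge 2$, Step~2 together with the fact that the minimum of finitely many numbers does not exceed their mean produces $n_l\in\{1,\dots,N_l\}$ with $|c_{n_l,N_l}(\bmu)|\le C/\ln(N_l)^{\epsilon}$; for the finitely many small $l$ pick $n_l$ arbitrarily. Since $\ln(N_l)=\ln([\rho^{l}])\to\infty$, we obtain $\frac1{[\rho^{m_l}]}\sum_{k=1}^{[\rho^{m_l}]}\bmu(k)\bmu(k+n_l)=c_{n_l,N_l}(\bmu)\to 0$, and likewise for $\lamob$; taking $\epsilon>1$ in Step~2 even makes $\big(c_{n_l,N_l}(\bmu)\big)_l$ summable, which is the ``summable along any divergent geometric sequence'' statement announced in the introduction. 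No serious obstacle is anticipated: the argument is a clean combination of Parseval, Cauchy--Schwarz and Davenport's estimate. The only point needing attention is the bookkeeping in Steps~1--2 --- one must truncate the shifted factor at $2N$ rather than $N$ so that every cross term $\bmu(m)\bmu(m+n)$ with $1\le m,n\le N$ is accounted for, and one must spend the Davenport saving on $\|h_N\|_\infty$ while keeping $g_N$ only in $L^{2}$; estimating both factors in $L^{\infty}$ would lose the inequality.
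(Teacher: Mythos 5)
Your proof is correct and is essentially the argument the paper itself uses: the identity expressing $N\,c_{n,N}$ as a Fourier coefficient of a product of two M\"{o}bius polynomials (one of length $N$, one of length $2N$), followed by Bessel--Parseval, pulling one factor out in sup norm to invoke Davenport's estimate \eqref{Dav1}--\eqref{Dav2}, and a final Cauchy--Schwarz in $n$ is exactly the chain \eqref{bo1}--\eqref{bo4} specialized to $\lambda=1$, $f_3=1$. The only cosmetic differences are which of the two factors receives the sup norm and that you obtain the geometric-sequence statement by a direct minimum-versus-mean pigeonhole rather than the paper's detour through summability of $\sum_m \frac1{[\rho^m]}\sum_n |c_{n,[\rho^m]}|$.
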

This gives
$$\frac1{N}\sum_{n=1}^{N}\Big|\frac1{N}\sum_{m=1}^{N}\bmu(m)\bmu(n+m)\Big|\tend{N}{+\infty}0,$$
and
$$\frac1{N}\sum_{n=1}^{N}\Big|\frac1{N}\sum_{m=1}^{N}\lamob(m)\lamob(n+m)\Big|\tend{N}{+\infty}0.$$
Before starting to prove our main results, let us point out that it is suffice to establish our result for a dense set of functions. Indeed, assume that the convergence holds for some $f_1$ and let $g$ be such that $||f-g||_1<\epsilon$, for a given $\epsilon>0.$ Put
$$\psi_N(f_1,f_2,f_3)=\frac1{N^2}\sum_{m,n=1}^{N}\bmu(n)\bmu(m)\bmu(n+m)f_1(T_1^nx) f_2(T_2^mx)f_3(T_3^{n+m}x).
$$
Then,
\begin{eqnarray*}
&&|\psi_N(f_1,f_2,f_3)-\psi_N(g,f_2,f_3)|\\
 &\leq& \frac1{N}\sum_{n=1}^{N}|f_1(T_1^nx)-g(T_1^nx)|
\Big|\frac1{N}\sum_{m=1}^{N}\bmu(m)\bmu(n+m)f_2(T_2^mx)f_3(T_3^{n+m}x)\Big|\\
&\leq& ||f_2||_{\infty}||f_3||_{\infty}\frac1{N}\sum_{n=1}^{N}|f_1(T_1^nx)-g(T_1^nx)|.
\end{eqnarray*}
Letting $N$ goes to $\infty$, it follows
$$\limsup|\psi_N(f_1,f_2,f_3)-\psi_N(g,f_2,f_3)| \leq \|f_1-g\|_1 \leq \epsilon,$$
by Birkhoff ergodic theorem combined with our assumption. Notice that the maps $T_i$ are
supposed to be ergodic. From now on, without loss of generality, we will assume that $T_i,i=1,2,3$ are ergodic.

\section{On the discrete spectrum case.}\label{sec:ds}
In this section, we focus our study on the case in which at least one of the maps $T_i, i=1,2,3$ has a
discrete spectrum.

\begin{proof}[{\textbf {Proof of Theorem \ref{main-2}}}]Let assume that $T_1$ has a discrete
spectrum and let $f_1$ be an eigenfunction with eigenvalue $\lambda$. Then, for almost all  $x \in X$, we can write
\begin{eqnarray}\label{bo1}
&&\Big|\frac1{N^2}\sum_{n,m=1}^{N}\bmu(n)\bmu(m)\bmu(n+m)
f_1(T_1^nx) f_2(T_2^mx)f_3(T_3^{n+m}x)\Big|\nonumber \\
&=& \Big|\frac1{N}\sum_{m=1}^{N}\bmu(m)f_2(T_2^mx)
\frac1{N} \sum_{n=1}^{N}\bmu(n) \bmu(n+m)\lambda^n
 f_3(T_3^{n+m}x)\Big| \nonumber\\
 & \leq &  \frac1{N} \sum_{m=1}^{N} \big|f_2(T_2^mx)\big|\Big|
 \frac1{N} \sum_{n=1}^{N}\bmu(n)\bmu(n+m) \lambda^n
f_3(T_3^{n+m}x)\Big|\nonumber\\
 &\leq& \|f_2\|_{\infty} \frac1{N} \sum_{m=1}^{N} \Big|\frac1{N}\sum_{n=1}^{N} \bmu(n)\bmu(n+m) \lambda^n
f_3(T_3^{n+m}x)\Big|
\end{eqnarray}

Applying Cauchy-Schwarz inequality we can rewrite \eqref{bo1} as
\begin{eqnarray}\label{boC1}
&&\Big|\frac1{N^2}\sum_{n,m=1}^{N}\bmu(n)\bmu(m)\bmu(n+m)
f_1(T_1^nx) f_2(T_2^mx)f_3(T_3^{n+m}x)\Big|\nonumber \\
&\leq &\|f_2\|_{\infty} \Big(\frac1{N} \sum_{m=1}^{N} \Big|\frac1{N}\sum_{n=1}^{N} \bmu(n)\bmu(n+m) \lambda^n
f_3(T_3^{n+m}x)\Big|^2\Big)^{\frac1{2}}
\end{eqnarray}

Furthermore, by Bourgain's observation \cite[equations (2.5) and (2.7)]{BourgainD}, we have
\begin{eqnarray}\label{bo2}
&&\sum_{m=1}^{N}\Big|\frac1{N}\sum_{n=1}^{N} \bmu(n)\bmu(n+m) \lambda^nf_3(T_1^{n+m}x)\Big|^2 \nonumber\\
 &=& \sum_{m=1}^{N}\Big|\int_{\T}\Big(\frac1{N}\sum_{n=1}^{N}
 \bmu(n)\lambda^n z^{-n}\Big)\Big(\sum_{p=1}^{2N} \bmu(p)
 f_3(T_3^px) z^p\Big) z^{-m} dz\Big|^2 \nonumber \\
 &\leq&  \int_{\T}\Big|\frac1{N}\sum_{n=1}^{N} \bmu(n)\lambda^n
 z^{-n}\Big|^2\Big|\sum_{p=1}^{2N} \bmu(p)
 f_3(T_3^px) z^p\Big|^2 dz.
\end{eqnarray}
The last inequality is due to Parseval-Bessel inequality. Indeed, put
$$\Phi_N(z)=\Big(\frac1{N}\sum_{n=1}^{N}
 \bmu(n)\lambda^n z^{-n}\Big)\Big(\sum_{p=1}^{2N} \bmu(p)
 f_3(T_3^px) z^p\Big).$$
Then, for any $m \in \Z$,
 $$ \widehat{\Phi_N}(m)=\int_{\T}\Big(\frac1{N}\sum_{n=1}^{N}
 \bmu(n)\lambda^n z^{-n}\Big)\Big(\sum_{p=1}^{2N} \bmu(p)
 f_3(T_3^px) z^p\Big) z^{-m} dz.$$
Whence
\begin{eqnarray*}
 \sum_{m=1}^{N}\Big|\int_{\T}\Big(\frac1{N}\sum_{n=1}^{N}
 \bmu(n)\lambda^n z^{-n}\Big)\Big(\sum_{p=1}^{2N} \bmu(p)
 f_3(T_3^px) z^p\Big) z^{-m} dz\Big|^2 &=&\sum_{m=1}^{N}\Big|\widehat{\Phi_N}(m)\Big|^2\\
 &\leq& \int_{\T} |\Phi_N(z)|^2 dz.
 \end{eqnarray*}
Now, combining \eqref{boC1} with \eqref{bo2} we can assert that
\begin{eqnarray}\label{bo3}
&&\Big|\frac1{N^2}\sum_{n,m=1}^{N}\bmu(n)\bmu(m)\bmu(n+m)
f_1(T_1^nx) f_2(T_2^mx)f_3(T_3^{n+m}x)\Big|\nonumber\\&=& \Big|\frac1{N}\sum_{m=1}^{N}\bmu(m)f_2(T_2^mx)
\frac1{N} \sum_{n=1}^{N}\bmu(n) \lambda^n\bmu(n+m)
 f_3(T_3^{n+m}x)\Big|\nonumber\\
 &\leq&\|f_2\|_{\infty} \Big(\frac1{N} \sup_{z \in \T} \Big|\frac1{N}\sum_{n=1}^{N} \bmu(n)\lambda^n
 z^{-n}\Big|^2 \int_{\T}\Big|\sum_{p=1}^{2N} \bmu(p)
 f_3(T_3^px) z^p\Big|^2 dz\Big)^{\frac12}.
\end{eqnarray}
We thus get
\begin{eqnarray}\label{bo4}
&&\Big|\frac1{N^2}\sum_{n,m=1}^{N}\bmu(n)\bmu(m)\bmu(n+m)
f_1(T_1^nx) f_2(T_2^mx)f_3(T_3^{n+m}x)\Big|\nonumber\\
 &\leq&\|f_2\|_{\infty}  \sup_{z \in \T} \Big|\frac1{N}\sum_{n=1}^{N}\bmu(n)
z^{-n}\Big| . \Big(\frac1{N} \sum_{p=1}^{2N}|\bmu(p)|\Big)^{\frac12} \|f_3\|_{\infty},
\end{eqnarray}
since the map $z \mapsto \lambda z$ is a continuous bijection on the torus, and we have
$$\ds \int_{\T}\Big|\sum_{p=1}^{2N} \bmu(p) f_3(T_3^px) z^p\Big|^2 dz=\sum_{p=1}^{2N}|\bmu(p)||f_3(T_3^px)|^2
\leq \sum_{p=1}^{2N}|\bmu(p)|  \|f_3\|_{\infty}^2. $$

It follows from Davenport's estimation~\eqref{Dav1} combined with \eqref{bo4} that for each $\vep>0$, we have
\begin{eqnarray}\label{eq:Dav3}
&&\Big|\frac1{N^2}\sum_{n,m=1}^{N}\bmu(n)\bmu(m)\bmu(n+m)
f_1(T_1^nx) f_2(T_2^mx)f_3(T_3^{n+m}x)\Big|\nonumber\\
&\leq& \sqrt{2}\|f_2\|_{\infty}\|f_3\|_{\infty}\frac{C}{\ln(N)^{\varepsilon}}.
\end{eqnarray}
where $C$ is a constant which depends only on $\varepsilon$. Letting $N$ go to $\infty$, we conclude that the
almost sure convergence holds. Moreover, by  the same argument, the almost sure convergence to $0$ holds with the
Liouville weight, and the proof of the theorem is complete.
\end{proof}

\section{On the self-correlation of M\"{o}bius and Louiville functions.}\label{sec:scml}
Notice that we have actually proved
\begin{lem}For any $\varepsilon>0$, for any $N \geq 2$,
\begin{eqnarray}\label{Dav3}
\frac1{N}\sum_{n=1}^{N}\Big|\frac1{N}\sum_{m=1}^{N} \bmu(n)\bmu(n+m)\Big| \leq \frac{C}{\ln(N)^{\varepsilon}},
\end{eqnarray}
where $C$  is a constant which depends only on $\varepsilon$.
\end{lem}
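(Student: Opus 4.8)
The plan is to read the Lemma off directly from the computation already carried out in the proof of Theorem~\ref{main-2}, specialized to the trivial system. Take $T_1=T_2=T_3=\Id$ and $f_1=f_2=f_3\equiv 1$; then $f_1$ is an eigenfunction of $T_1$ for the eigenvalue $\lambda=1$, so the chain of inequalities \eqref{bo1}--\eqref{eq:Dav3} applies verbatim. For this choice of data the last line of \eqref{bo1} is precisely $\frac1N\sum_{m=1}^N\big|\frac1N\sum_{n=1}^N\bmu(n)\bmu(n+m)\big|$, which, up to renaming the summation variables, is the average to be estimated in \eqref{Dav3}. Hence it suffices to follow the four subsequent estimates, noting that the argument never used anything about $f_2,f_3$ beyond $\|f_2\|_\infty,\|f_3\|_\infty\le1$.

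First I would apply the Cauchy--Schwarz inequality, as in \eqref{boC1}, to dominate this average by $\big(\frac1N\sum_{m=1}^N\big|\frac1N\sum_{n=1}^N\bmu(n)\bmu(n+m)\big|^2\big)^{1/2}$. Next I would use Bourgain's Fourier device of \eqref{bo2}: writing $z=e^{2\pi i t}$, one has $\frac1N\sum_{n=1}^N\bmu(n)\bmu(n+m)=\widehat{\Phi_N}(m)$ for $1\le m\le N$, where $\Phi_N(z)=\big(\frac1N\sum_{n=1}^N\bmu(n)z^{-n}\big)\big(\sum_{p=1}^{2N}\bmu(p)z^{p}\big)$, the truncation at $2N$ being forced by $1\le n,m\le N$. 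By the Parseval--Bessel inequality and the orthonormality of $(z^p)$,
\[
\sum_{m=1}^N\big|\widehat{\Phi_N}(m)\big|^2\le\int_{\T}|\Phi_N(z)|^2\,dz\le\Big(\sup_{z\in\T}\Big|\frac1N\sum_{n=1}^N\bmu(n)z^{-n}\Big|^2\Big)\sum_{p=1}^{2N}|\bmu(p)|^2\le 2N\,\sup_{z\in\T}\Big|\frac1N\sum_{n=1}^N\bmu(n)z^{-n}\Big|^2,
\]
since $|\bmu(p)|\le1$. Finally I would invoke Davenport's estimate \eqref{Dav1} (with $t$ replaced by $-t$, which is harmless) to get $\sup_{z\in\T}\big|\frac1N\sum_{n=1}^N\bmu(n)z^{-n}\big|\le C/\ln(N)^{\varepsilon}$, so that the average in question is $\le\sqrt2\,C/\ln(N)^{\varepsilon}$, which is the asserted bound with a new constant depending only on $\varepsilon$. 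The Liouville case is word-for-word the same, using \eqref{Dav2} in place of \eqref{Dav1}.

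There is no genuine obstacle here: the whole content is already present in \eqref{bo1}--\eqref{eq:Dav3}, and the Lemma is the remark that those inequalities in fact bound the intermediate quantity $\frac1N\sum_m\big|\frac1N\sum_n\bmu(n)\bmu(n+m)\big|$, not merely the double average $\frac1{N^2}\big|\sum_{n,m}\bmu(n)\bmu(m)\bmu(n+m)\big|$. The one point requiring a little care is the bookkeeping in the Parseval step: one must place the $\sup$-norm estimate on the \emph{short} exponential sum $\frac1N\sum_{n\le N}\bmu(n)z^{-n}$, where Davenport provides an honest saving, and the trivial $L^2$-estimate $\int_{\T}\big|\sum_{p\le 2N}\bmu(p)z^p\big|^2\,dz=\sum_{p\le 2N}|\bmu(p)|^2\le 2N$ on the \emph{long} sum --- and not the other way around.
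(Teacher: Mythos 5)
Your proposal is correct and is essentially the paper's own proof: the authors likewise obtain the Lemma by running the chain \eqref{bo1}--\eqref{bo3} with $\lambda=1$ and $f_3=1$ (so the constant functions on the trivial system), reading off the intermediate quantity $\frac1N\sum_m\big|\frac1N\sum_n\bmu(n)\bmu(n+m)\big|$ before the final averaging, and then invoking Davenport's estimate \eqref{Dav1}. Your bookkeeping of the Cauchy--Schwarz, Parseval--Bessel and $\sup$-norm steps matches theirs, including the final constant $\sqrt2\,C$.
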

\begin{proof}
  This follows by combining \eqref{bo1}, \eqref{bo2}, \eqref{bo3}
with \eqref{Dav1} and by taking $\lambda = 1$ and $f_3 = 1$.
\end{proof}
At this point, the proof of the second part of Theorem \ref{Main-P} follows.\\

Let $\rho>1$, then for $N=[\rho^m]$ with some $m\ge1$, \eqref{Dav3} takes the form

\[
 \frac1{[\rho^m]}\sum_{n=1}^{[\rho^m]}|c_{n,N}|\\ \leq \frac{C}{{(m\ln(\rho))}^{\varepsilon}}\text{,~~~~~for any~~ }\varepsilon>0.
\]
By choosing $\varepsilon=2$, we obtain
\[
\sum_{ m \geq 1}\frac1{[\rho^m]}\sum_{n\leq [\rho^m]}|c_{n,[\rho^m]}| <+\infty.
\]
Let $(\delta_l)$ be a sequence of positive numbers such that $\delta_l \tend{l}{+\infty}0.$ Then, for any $l \geq 1$
there exists a positive integer $m_l$ such that
\[
\sum_{ m \geq m_l}\frac1{[\rho^m]}\sum_{n\leq [\rho^m]}|c_{n,[\rho^m]}| <\delta_l.
\]
This gives, for any $m \geq m_l$,
\[
\frac1{[\rho^m]}\sum_{n\leq [\rho^m]}|c_{n,[\rho^m]}| <\delta_l.
\]
Hence, there exists $n_l \leq [\rho^m] $ such that
\[
|c_{n_l,[\rho^{m_l}]}| <\delta_l.
\]
By letting $l$ go to $\infty$, we get $c_{n_l,[\rho^{m_l}]} \tend{l}{+\infty} 0$. This prove the first part of Theorem~\ref{Main-P}.

Note that we have proved more, namely,
\begin{Cor}For any $\rho>1$,
$$\sum_{ m \geq 1}\frac1{[\rho^m]}\sum_{n=1}^{[\rho^m]}\Big|\frac1{[\rho^m]}
\sum_{k=1}^{[\rho^{m}]}\bmu(k)\bmu(k+n)\Big| <+\infty. $$
\end{Cor}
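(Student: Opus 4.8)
The plan is to derive this corollary directly from the Lemma of Section~\ref{sec:scml} (estimate \eqref{Dav3}) by specializing $N$ to the geometric scale $[\rho^m]$ and choosing the free exponent so as to produce a summable series. Fix $\rho>1$ once and for all.

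\emph{Step 1: specialize \eqref{Dav3}.} I would apply \eqref{Dav3} with the parameter $\varepsilon$ set equal to $2$ and with $N=[\rho^m]$, for every $m$ large enough that $[\rho^m]\ge 2$ (the finitely many smaller $m$ contribute a finite amount to the sum and are harmless). This gives
$$\frac1{[\rho^m]}\sum_{n=1}^{[\rho^m]}\Big|\frac1{[\rho^m]}\sum_{k=1}^{[\rho^m]}\bmu(k)\bmu(k+n)\Big|\ \le\ \frac{C}{\ln([\rho^m])^{2}},$$
where $C$ is the constant from \eqref{Dav3} corresponding to $\varepsilon=2$; the key point, already visible in \eqref{bo4}--\eqref{eq:Dav3}, is that $C$ is \emph{uniform in $N$}.

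\emph{Step 2: convert $\ln([\rho^m])$ into $m$.} Since $[\rho^m]>\rho^m-1\ge \tfrac12\rho^m$ as soon as $\rho^m\ge 2$, one has $\ln([\rho^m])\ge m\ln\rho-\ln 2$, and for all but finitely many $m$ this is $\ge\tfrac12 m\ln\rho>0$. Hence there is a constant $C_\rho>0$ (depending only on $\rho$, after absorbing the finitely many exceptional $m$) with
$$\frac1{[\rho^m]}\sum_{n=1}^{[\rho^m]}\Big|\frac1{[\rho^m]}\sum_{k=1}^{[\rho^m]}\bmu(k)\bmu(k+n)\Big|\ \le\ \frac{C_\rho}{m^{2}}\qquad\text{for all }m\ge 1.$$

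\emph{Step 3: sum.} Summing over $m\ge 1$ and using $\sum_{m\ge1}m^{-2}=\pi^2/6<+\infty$ yields the asserted convergence. The same chain \eqref{bo1}--\eqref{bo3} with Davenport's estimate \eqref{Dav2} for $\lamob$ in place of \eqref{Dav1} gives the Liouville analogue verbatim. I do not expect any genuine obstacle here: the only point requiring (elementary) care is Step~2, the passage from $\ln([\rho^m])^2$ to $m^2$, which is exactly what forces the choice $\varepsilon=2$ rather than $\varepsilon=1$ (for which $\sum 1/m$ diverges), together with the already-established uniformity of the constant in \eqref{Dav3}.
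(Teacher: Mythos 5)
Your proposal is correct and follows essentially the same route as the paper: specializing the Lemma's estimate \eqref{Dav3} to $N=[\rho^m]$ with $\varepsilon=2$, so that $\ln([\rho^m])^{2}\asymp (m\ln\rho)^{2}$ and the series converges by comparison with $\sum m^{-2}$. Your Step~2 is in fact slightly more careful than the paper about the floor function and the finitely many small $m$, but the argument is the same.
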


\begin{rem} It is shown in \cite{al-Diser} that if Sarnak's conjecture holds with some technical assumption then the self-corrections of $\bmu$ satisfy
$$\frac{1}{2N}\sum_{n=-N}^{N}\bmu(n)\bmu(n+k) \tend{N}{+\infty} 0,$$
for any $k<0$ \footnote{We extend the definition of $\bmu$ to the negative integer in usual fashion.}.
\end{rem}

\section{On the nilsystem case.}\label{sec:ns}
In this section, we present the proof of our first main result when at least one of the dynamical systems is a nilsystem.
\begin{proof}[{\textbf {Proof of Theorem \ref{main-3}}}]Let us assume that $T_1$ is an elementary nilsystem of order $s$, that is, $T_1$ is an ergodic $s$-step nilsystem on $X=G/\Gamma$, where $G$ is a nilpotent Lie group of dimension $s$ and $\Gamma$ is a discrete subgroup. By the density argument, it suffices to prove the theorem for a nilsequence $(f_1(T^nx))$, $x\in X$, $f_1$ is a continuous function on $X$. Now, by Leibman's observation \cite{Leib}, we can embed $G$ into a connected and simply-connected nilpotent Lie group $\hat{G}$ with a cocompact subgroup $\hat{\Gamma}$ such that $X=G/\Gamma$ is isomorphic to a sub-nilmanifold of $\hat{X}=\hat{G}/\hat{\Gamma},$ with all translations from $G$ represented in $\hat{G}$. Furthermore, by Tietze-Uryshon extension theorem \cite[p.48]{Dudley}, we can extend $f_1$ to $\hat{X}$. Hence, we are reduced to prove our main result for the nilsystems on $\hat{X}$. \\

Analyzing the proof given in section \ref{sec:ds}, we need to estimate
$$\sup_{z \in \T} \Big|\frac1{N}\sum_{n=1}^{N} \bmu(n) f_1(T^nx) z^{-n}\Big|.$$
Again, by the density argument, we may assume that $f_1$ is Lipschitz. We further notice that the sequence $(a_nz^{-n})_n$ can viewed as a nilsequence on
$Y=\hat{G}/\hat{\Gamma} \times \R/\T$. But the group $\hat{G}\times \R$ is connected and simply-connected, and the function
$F_1(x,z)=f_1(x)z^{-1}$ is Lipschitz. Then, we can apply Green-Tao's Theorem (Theorem 1.1 in \cite{Green-Tao}) for a given filtration $(H_n)$ of $\hat{G}\times \R$ of length $m \geq 1$. This gives,
$$\sup_{z \in \T} \Big|\frac1{N}\sum_{n=1}^{N} \bmu(n) f_1(T^nx) z^{-n}\Big| \leq C \frac{1+\|f_1\|_{L}}{\ln^{A}(N)},$$
For any $A>0$, uniformly on $x$ and $z$. Letting $N$ goes to infinity, we get
$$\sup_{z \in \T} \Big|\frac1{N}\sum_{n=1}^{N} \bmu(n) f_1(T^nx) z^{-n}\Big| \tend{N}{+\infty}0.$$
Whence,
$$\Big|\frac1{N^2}\sum_{n,m=1}^{N}\bmu(n)\bmu(m)\bmu(n+m)
f_1(T_1^nx) f_2(T_2^mx)f_3(T_3^{n+m}x)\Big| \tend{N}{+\infty}0. $$
by \eqref{bo4},  which end the proof of the theorem.
\end{proof}
Note that we have proved the following popular and well-known result.
\begin{Th} Sarnak's conjecture holds for any nilsystem.
\end{Th}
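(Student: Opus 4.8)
The plan is to read the statement directly off Green-Tao's Theorem~\ref{Green-Tao-th}, exactly as in the proof of Theorem~\ref{main-3}, so that essentially no new work is required. Let $(X,T)$ be a nilsystem, say $X=G/\Gamma$ with $G$ a nilpotent Lie group, $\Gamma$ a discrete cocompact subgroup, and $T=T_g$ left translation by a fixed $g\in G$. Since nilsystems are distal they have zero topological entropy, so $(X,T)$ is within the scope of Sarnak's conjecture. Fix $f\in C(X)$ and $x\in X$; the goal is to show $\frac1{N}\sum_{n=1}^{N}\bmu(n)f(T^nx)\tend{N}{+\infty}0$.

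First I would reduce to $f$ Lipschitz. By Stone-Weierstrass the Lipschitz functions are dense in $C(X)$ for $\|\cdot\|_{\infty}$, and if $\|f-f'\|_{\infty}<\epsilon$ then $\big|\frac1{N}\sum_{n=1}^{N}\bmu(n)(f-f')(T^nx)\big|\leq\epsilon\cdot\frac1{N}\sum_{n=1}^{N}|\bmu(n)|\leq\epsilon$ uniformly in $N$, so it suffices to treat a dense set of $f$. Next, following Leibman's observation \cite{Leib}, I would embed $G$ into a connected and simply-connected nilpotent Lie group $\hat{G}$ with a cocompact subgroup $\hat{\Gamma}$ so that $X$ becomes a sub-nilmanifold of $\hat{X}=\hat{G}/\hat{\Gamma}$ with all translations from $G$ represented in $\hat{G}$, and extend $f$ to a Lipschitz function on $\hat{X}$ by the Tietze-Uryshon (equivalently Kirszbraun-Mcshane) extension theorem. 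After these two reductions we may assume $G$ is connected and simply-connected, $f$ is Lipschitz, and $\hat{X}$ carries a $Q$-rational Mal'cev basis for some $Q\geq 2$.

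The main step is then a matter of quoting the right theorem. The orbit $n\mapsto g^nx$ is a polynomial sequence adapted to the lower central filtration $(G_p)$ of $G$, i.e.\ it lies in ${\textrm{poly}}(\Z,(G_p))$. Applying Theorem~\ref{Green-Tao-th} with an arbitrary $A>0$ gives
$$\Big|\frac1{N}\sum_{n=1}^{N}\bmu(n)f(g^nx\Gamma)\Big|\leq\sup_{h\in{\textrm{poly}}(\Z,(G_p))}\Big|\frac1{N}\sum_{n=1}^{N}\bmu(n)f(h(n)\Gamma)\Big|\leq C\,\frac{1+\|f\|_{L}}{\ln^{A}(N)},$$
with $C$ depending only on the nilmanifold data and on $A$, uniformly in $x$. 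Letting $N\to\infty$ yields $\frac1{N}\sum_{n=1}^{N}\bmu(n)f(T^nx)\tend{N}{+\infty}0$, uniformly in $x\in X$, which is precisely Sarnak's conjecture for $(X,T)$.

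There is no genuine obstacle here: the entire arithmetic substance is already contained in Green-Tao's theorem, and the only points requiring care are the two preliminary reductions — to Lipschitz functions (routine, via Stone-Weierstrass and $\frac1N\sum_{n\le N}|\bmu(n)|\le 1$) and to the connected, simply-connected case (routine, via Leibman's embedding and a continuous extension theorem). The most delicate bookkeeping is checking that the orbit sequence genuinely belongs to ${\textrm{poly}}(\Z,(G_p))$ and that the extension to $\hat{X}$ preserves the Lipschitz property with a controlled norm, but both are standard facts recalled in Section~\ref{Sec:dt}.
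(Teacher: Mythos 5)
Your proposal is correct and follows essentially the same route as the paper, which obtains this theorem as a byproduct of the proof of Theorem~\ref{main-3}: Leibman's embedding into a connected, simply-connected $\hat{G}/\hat{\Gamma}$, extension and density reduction to Lipschitz functions, and then a direct application of Green--Tao's Theorem~\ref{Green-Tao-th} to the polynomial orbit sequence. The only (harmless) difference is that you apply Green--Tao directly to $f(g^nx\Gamma)$, whereas the paper works with the extra twist $z^{-n}$ on $\hat{G}/\hat{\Gamma}\times\R/\T$ because it needs the Wiener--Wintner supremum; specializing $z=1$ recovers exactly your statement.
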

\section{On Wiener Wintner's version of Katai-Bourgain-Sarnak-Ziegler criterion.}\label{sec:kbsz}
As mentioned before, notice that the main ingredient, in the proof given in section 2, is based on the estimation of the following quantity:
$$\sup_{z \in \T}\Big|\frac1{N}\sum_{n \leq N} \bmu(n) \lambda^n z^n\Big|.$$
It follows that to tackle the general case, we need to estimate the following quantity:
\begin{eqnarray}\label{WWS}
\sup_{t}\Big|\frac1{N}\sum_{n=1}^{N}\bmu(n)f(T^nx) e^{2 \pi i n t }\Big|
\end{eqnarray}
The almost sure pointwise convergence of \eqref{WWS} without the M\"{o}bius weight follows from Wiener-Wintner theorem.
In this case, \eqref{WWS} converges to zero almost surely provided that the spectral measure of $f$ is a
continuous measure\footnote{Recall that $\sigma_f$ is a finite Borel measure on the circle determined by
its Fourier transform given by $\widehat{\sigma}_f(n)=\int f\circ T^n\cdot \overline{f}\ d\mu$, $n\in\Z$.},
that is, $f$ is in the orthocomplement of the Kronecker factor. In his 1993's paper \cite{Lesigne},
Lesigne  gave a modern proof of Wiener-Wintner theorem. The proof is based on the following
\linebreak van der Corput inequality (a proof of it can found in \cite{KN}).

\begin{Th}[van der Corput's inequality.]
Let $u_0, \cdots, u_{N-1}$ be
complex numbers, and let H be an integer with $0 \leq H \leq N-1.$ Then
$$ \Big|\frac{1}{N}\sum_{n=0}^{N-1}u_n\Big|^2
\leq$$
$$\frac{N+H}{N^2(H+1)}\sum_{n=0}^{N-1}|u_n|^2
+2\frac{N+H}{N^2(H+1)^2}\sum_{h=1}^{H}(H+1-h)~Re\Big(\sum_{n=0}^{N-h-1}u_{n+h}
\overline {u_n}\Big).$$
where $Re(z)$ denotes the real part of $z  \in \C$.
\end{Th}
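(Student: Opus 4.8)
The plan is to prove van der Corput's inequality by the classical ``averaging a shift'' argument: extend the sequence by zero so that total sums are shift-invariant, introduce an extra average over the shift parameter $h\in\{0,\dots,H\}$, apply Cauchy--Schwarz, and then expand the resulting square and reorganize the double sum according to the difference of the two shift indices.

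First I would set $u_n\egdef 0$ for $n<0$ and for $n\ge N$, so that for every integer $h$ one has $\sum_{n\in\Z}u_{n+h}=\sum_{n=0}^{N-1}u_n$. Summing this identity over $h=0,1,\dots,H$ gives
$$(H+1)\sum_{n=0}^{N-1}u_n=\sum_{h=0}^{H}\sum_{n\in\Z}u_{n+h}=\sum_{n\in\Z}\Big(\sum_{h=0}^{H}u_{n+h}\Big),$$
and, writing $v_n\egdef\sum_{h=0}^{H}u_{n+h}$, the term $v_n$ vanishes unless $-H\le n\le N-1$, an index set of cardinality $N+H$. Applying the Cauchy--Schwarz inequality to $\sum_{n=-H}^{N-1}v_n\cdot 1$ then yields
$$(H+1)^2\Big|\sum_{n=0}^{N-1}u_n\Big|^2\le (N+H)\sum_{n=-H}^{N-1}|v_n|^2=(N+H)\sum_{n\in\Z}\sum_{h,h'=0}^{H}u_{n+h}\,\overline{u_{n+h'}}.$$

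Next I would reorganize the triple sum on the right. For fixed $h,h'$ the substitution $m=n+h'$ gives $\sum_{n\in\Z}u_{n+h}\overline{u_{n+h'}}=\sum_{m\in\Z}u_{m+(h-h')}\overline{u_m}$, which depends on $(h,h')$ only through $k\egdef h-h'$. Since there are exactly $H+1-|k|$ pairs $(h,h')\in\{0,\dots,H\}^2$ with $h-h'=k$, the inner double sum equals $\sum_{k=-H}^{H}(H+1-|k|)\sum_{m\in\Z}u_{m+k}\overline{u_m}$. Isolating $k=0$ and pairing $k$ with $-k$ for $k\ge 1$ — using $\sum_{m}u_{m-k}\overline{u_m}=\overline{\sum_m u_{m+k}\overline{u_m}}$ — collapses this to $(H+1)\sum_{m\in\Z}|u_m|^2+2\sum_{h=1}^{H}(H+1-h)\,\mathrm{Re}\big(\sum_{m\in\Z}u_{m+h}\overline{u_m}\big)$. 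Finally, re-inserting the truncation (so that $\sum_{m\in\Z}|u_m|^2=\sum_{m=0}^{N-1}|u_m|^2$ and $\sum_{m\in\Z}u_{m+h}\overline{u_m}=\sum_{m=0}^{N-h-1}u_{m+h}\overline{u_m}$) and dividing both sides by $N^2(H+1)^2$ gives the asserted inequality.

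There is no serious obstacle here: the whole argument is two lines of Cauchy--Schwarz surrounded by index bookkeeping. The only points that require genuine care are the justification that a shifted total sum agrees with the original (this is exactly where the zero-extension is used), and the correct count of the multiplicity $H+1-|k|$ of each difference $k=h-h'$; everything else is routine.
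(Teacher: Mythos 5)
Your proof is correct and complete: the zero-extension, the average over the shift $h\in\{0,\dots,H\}$, the Cauchy--Schwarz step on the $N+H$ nonzero terms of $v_n$, and the multiplicity count $H+1-|k|$ for each difference $k=h-h'$ all check out, and the final division by $N^2(H+1)^2$ reproduces the stated inequality exactly. The paper itself gives no proof of this lemma, merely citing Kuipers--Niederreiter, and your argument is precisely the classical one found there, so there is nothing to compare beyond noting that you have supplied the standard derivation the paper omits.
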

If one tries to apply naively van der Corput machinery in our case, then this leads him
to the study of the self-correlation of order 2 of $\bmu$ which seems to be an outstanding problem as
previously mentioned in section \ref{sec:ms}. This is due, as pointed by Sarnak in \cite{Sarnak4},
to the lack of methods for the study of the self-correlation of $\bmu$.
Nevertheless, there are methods to apply for the study of the correlation of $\bmu$ with certain
sequences. Thanks to the bilinear method of Vinogradov \cite{Vi}. This allows Bourgain-Sarnak-Ziegler
to produce a criterion as a tool in the study of Sarnak's conjecture. Here, we state a
Wiener-Wintner's version of it ((WWKBSZ for short): Wiener-Wintner's version of K\'{a}tai-Bourgain-Sarnak-Ziegler
criterion \footnote{K\'{a}tai's version of the criterion is stated in \cite{Katai}.}).

\begin{Th}[WWKBSZ criterion]\label{WWKBSZ}
Let $(X,\ca,\mu)$ be a Lebesgue probability space and $T$ be an invertible measure preserving transformation. Let $\bnu$ be a multiplicative function\footnote{An arithmetical function $\bnu$ is said to be multiplicative if
$\bnu(mn)=\bnu(m)\bnu(n)$ whenever $n$ and $m$ are coprime.}. Let $f$ be in $L^{\infty}$ with $\|f\|_{\infty} \leq 1$ and
$\varepsilon>0$ and assume that for almost all point $x\in X$ and for all different prime numbers $p$ and $q$ less than $\exp(1/\varepsilon)$, we have
  \begin{equation}
    \label{eq:f_k limit}
     \limsup_{N\to\infty}\sup_{t}\left| \dfrac{1}{N}\sum_{n=1}^N e^{2 \pi i n (p-q) t}f (T^{pn}x) f(T^{qn}x) \right| < \varepsilon,
  \end{equation}
  then, for almost all $x \in X$, we have
  \begin{equation}
    \label{eq:almost orthogonality for f_k}
     \limsup_{N\to\infty} \sup_{t}\left|\dfrac{1}{N} \sum_{n=1}^N \bnu(n) e^{2 \pi i n t}f(T^{n}x) \right| < 2\sqrt{\varepsilon\log1/\varepsilon}.
  \end{equation}
\end{Th}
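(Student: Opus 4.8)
The plan is to obtain Theorem~\ref{WWKBSZ} by replaying the proof of the K\'{a}tai--Bourgain--Sarnak--Ziegler criterion with the weight sequence $a(n):=e^{2\pi int}f(T^nx)$ in place of $f(T^nx)$, and carrying the supremum over $t\in\T$ through every estimate. The point to exploit is that the KBSZ argument uses nothing about $\bnu$ beyond multiplicativity and $|\bnu|\le 1$, and nothing about the weight beyond $|a(n)|\le 1$ together with upper bounds for the dilated self-correlations $\frac1N\sum_{n\le N}a(pn)\overline{a(qn)}$; for our choice of $a$ one has
$$a(pn)\,\overline{a(qn)}=e^{2\pi in(p-q)t}\,f(T^{pn}x)\,\overline{f(T^{qn}x)},$$
so these dilated self-correlations are precisely the quantities that hypothesis~\eqref{eq:f_k limit} bounds by $\varepsilon$, uniformly in $t$. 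In particular no control of self-correlations of $\bnu$ itself is needed, which is the very point of the criterion. (Equivalently, one may invoke the quantitative form of KBSZ for the family of skew products $(X\times\T,\,T\times R_t)$, with $R_t$ the rotation by $t$ and $F(x,\theta)=f(x)e^{2\pi i\theta}$: the KBSZ bound depends on the data only through $\|F\|_\infty\le 1$ and the size of the correlations, hence is automatically uniform in $t$.)

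Concretely, fix $\varepsilon>0$ and let $\mathcal P=\{p\ \text{prime}:\ p<\exp(1/\varepsilon)\}$. Since $\mathcal P$ is finite, intersecting over the finitely many pairs $p\neq q$ in $\mathcal P$ the full-measure sets provided by \eqref{eq:f_k limit} yields one full-measure set of $x$ on which, for $N$ large, $\sup_t\bigl|\frac1N\sum_{n\le N}e^{2\pi in(p-q)t}f(T^{pn}x)f(T^{qn}x)\bigr|<\varepsilon$ for all such $p,q$. Decomposing each $n\le N$ according to its smallest prime factor in $\mathcal P$ (or its absence) and using $\bnu(pm)=\bnu(p)\bnu(m)$ when $p\nmid m$, one bounds $s_N(t):=\frac1N\sum_{n\le N}\bnu(n)e^{2\pi int}f(T^nx)$ by the contribution of the integers $n\le N$ with no prime factor in $\mathcal P$ — whose density is $O(1/\log\exp(1/\varepsilon))=O(\varepsilon)$ by Mertens' theorem — plus a sum over $p\in\mathcal P$ of dilated averages $\frac1N\sum_m\bnu(m)a(pm)$. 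Applying the Cauchy--Schwarz inequality to the latter and expanding the square, the diagonal ($p=q$) contribution is handled trivially, while the off-diagonal ($p\neq q$) contribution is a weighted sum of the dilated self-correlations above, each $<\varepsilon$. Collecting the pieces and invoking Mertens' estimate $\sum_{p<\exp(1/\varepsilon)}1/p=\log(1/\varepsilon)+O(1)$ produces a bound of the form $\sup_t|s_N(t)|\le C\sqrt{\varepsilon\log(1/\varepsilon)}+o_N(1)$; with the cut-off $\exp(1/\varepsilon)$ and the family of primes used inside it chosen appropriately, the constant $C$ can be brought down to $2$. Letting $N\to\infty$ gives \eqref{eq:almost orthogonality for f_k}, the strict inequality being obtained by running the estimate with a value strictly smaller than $\varepsilon$ bounding the correlations (legitimate since \eqref{eq:f_k limit} gives a $\limsup$ strictly below $\varepsilon$). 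Every step above is monotone in $\sup_t|\cdot|$, so the supremum over $t$ is kept throughout at no cost.

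The delicate step is exactly this quantitative bookkeeping: one must balance the diagonal losses in the Cauchy--Schwarz step, the loss coming from integers with no prime factor below $\exp(1/\varepsilon)$, and the accumulated constants, so that the final bound is genuinely $2\sqrt{\varepsilon\log(1/\varepsilon)}$ rather than a worse power of $\varepsilon$ or an extra factor of $\log(1/\varepsilon)$; pinning down the precise match between the cut-off $\exp(1/\varepsilon)$ and the constant $2$ is where the effort lies. The two soft ingredients, by contrast, are immediate: the reduction to a full-measure set of $x$ costs nothing because only finitely many pairs of primes intervene, and the uniformity in $t$ costs nothing because the character $e^{2\pi int}$ is transported inertly and enters only inside the $\sup_t$ of the correlations, which is exactly what the hypothesis controls.
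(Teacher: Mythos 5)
Your proposal is correct and follows essentially the same route as the paper: the paper's proof simply declares the argument to be word-for-word that of Theorem 2 in \cite{BSZ} with the character $e^{2\pi int}$ carried along, the only new ingredient being the subadditivity $\sup(F+G)\le\sup F+\sup G$ at the Cauchy--Schwarz step, which is exactly the observation you make when you say the supremum over $t$ is ``kept throughout at no cost.'' You have merely unpacked the BSZ machinery (decomposition by small prime factors, Mertens, Cauchy--Schwarz over pairs of primes) that the paper leaves implicit by citation.
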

\begin{proof}The proof is, indeed word-for-word the same as that of Theorem 2 in \cite{BSZ},
except that at the equation (2.7) one need to apply the following elementary inequality:
for any two bounded positive functions $F$ and $G$, we have
$$\sup(F(x)+G(x)) \leq \sup(F(x))+\sup(G(x)).$$
\end{proof}
It follows from the WWKBSZ criterion that we need to estimate \eqref{eq:f_k limit}. But this follows by a careful application of van der Corput trick combined with Bourgain's double recurrence theorem (BDRT for short) as it was shown by Assani, Duncan and Moore in \cite{Assani-Moore}. Precisely, they proved the following
\begin{Th}[WW's version of BDRT \cite{Assani-Moore}]\label{WWBDRT}Let $(X,\ca,\mu)$ be a Lebesgue probability space and $T$ be an invertible measure preserving transformation. Let $f,g \in L^{\infty}(X)$. Then, for any $k,l \in \Z$ with $|\max(k,l)|>0$, for almost all $x \in X$, we have
$$\E\big(f|_{\mathcal{Z}_2}\big)=0 \Longrightarrow \sup_{t}\Big|\dfrac{1}{N}\sum_{n=1}^{N}e^{2 \pi i n t}f(T^{kn}x)g(T^{ln}x)\Big|\tend{N}{+\infty}0.$$
\end{Th}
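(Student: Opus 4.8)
We assume $k\neq l$ and $kl\neq 0$ (the range relevant below, with $k,l$ distinct primes), and, decomposing into ergodic components if necessary, that $T$ is ergodic; normalise $\|f\|_{\infty},\|g\|_{\infty}\le 1$.

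The first move is to remove the $\sup_t$ by van der Corput. Fixing $x$ and $t$ and setting $u_n=e^{2\pi i nt}f(T^{kn}x)g(T^{ln}x)$, the point is the identity
$$u_{n+h}\,\overline{u_n}=e^{2\pi i ht}\,\phi_h(T^{kn}x)\,\psi_h(T^{ln}x),\qquad \phi_h:=\overline{f}\cdot(f\circ T^{kh}),\quad \psi_h:=\overline{g}\cdot(g\circ T^{lh}),$$
so that $\bigl|\sum_n u_{n+h}\overline{u_n}\bigr|=\bigl|\sum_n \phi_h(T^{kn}x)\psi_h(T^{ln}x)\bigr|$ carries no $t$. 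By Bourgain's double recurrence theorem \cite{BourgainD}, for each fixed $h$ the average $\frac1N\sum_{n=1}^{N}\phi_h(T^{kn}x)\psi_h(T^{ln}x)$ converges a.e.\ to a limit $L_h(x)$, and, being uniformly bounded, also in $L^2(\mu)$. Substituting into van der Corput's inequality (with parameter $H$) and letting $N\to\infty$ gives, for a.e.\ $x$ and all $H\ge 1$,
$$\limsup_{N\to\infty}\ \sup_{t}\Bigl|\frac1N\sum_{n=1}^{N}e^{2\pi i nt}f(T^{kn}x)g(T^{ln}x)\Bigr|^{2}\ \le\ \frac{1}{H+1}+\frac{2}{H+1}\sum_{h=1}^{H}|L_h(x)|.$$

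Hence it suffices to show $\liminf_{H\to\infty}\frac1H\sum_{h=1}^{H}|L_h(x)|=0$ for a.e.\ $x$, and for this I would bound $\|L_h\|_1\le\|L_h\|_2$. The key input is the quantitative form of the classical fact that the Kronecker factor $\mathcal{Z}_1$ is characteristic, in $L^2$, for two--term averages with distinct nonzero exponents, namely $\|L_h\|_2\le C(k,l)\,\|\psi_h\|_\infty\,\||\phi_h|\|_2\le C(k,l)\,\||\phi_h|\|_2$. I would obtain this either from a second van der Corput estimate performed in $L^2$ (substitute $y=T^{kn}x$, $T^{ln}x=T^{(l-k)n}y$, apply the mean ergodic theorem for $T^{l-k}$, and recognise what is left as a $\||\cdot|\|_2^4$--type average of $\phi_h$), or, more transparently, by expanding $\E(\phi_h\mid\mathcal{Z}_1)=\sum_\lambda c_\lambda\chi_\lambda$ and $\E(\psi_h\mid\mathcal{Z}_1)=\sum_\mu d_\mu\chi_\mu$ in eigenfunctions: the average over $n$ keeps only the resonant pairs $\lambda^{k}\mu^{l}=1$, at most $|k-l|$ of which share a given product eigenvalue $\lambda\mu$, so two Cauchy--Schwarz steps arranged to land in $\ell^4$ yield
$$\|L_h\|_2^2\ \le\ |k-l|\sqrt{|kl|}\;\Bigl(\sum_\lambda|c_\lambda|^4\Bigr)^{1/2}\Bigl(\sum_\mu|d_\mu|^4\Bigr)^{1/2}\ =\ |k-l|\sqrt{|kl|}\;\||\phi_h|\|_2^2\,\||\psi_h|\|_2^2,$$
the last equality because $\sum_\lambda|c_\lambda|^4=\||\E(\phi_h\mid\mathcal{Z}_1)|\|_2^4=\||\phi_h|\|_2^4$ (the seminorm $\||\cdot|\|_2$ only registers the atomic part of the spectrum), and $\||\psi_h|\|_2\le\|g\|_\infty^2\le 1$. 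Now the hypothesis enters: $\E(f\mid\mathcal{Z}_2)=0$ means $\||f|\|_3=0$, so the defining recursion for the Gowers--Host--Kra seminorms gives $\frac1M\sum_{m\le M}\||\,\overline{f}\cdot(f\circ T^{m})\,|\|_2^4\to\||f|\|_3^8=0$; since the summands are non-negative the subaverage over multiples of $k$ also vanishes, i.e.\ $\frac1H\sum_{h=1}^{H}\||\phi_h|\|_2^4\to 0$, and the power--mean inequality then gives
$$\frac1H\sum_{h=1}^{H}\|L_h\|_1\ \le\ C(k,l)\,\Bigl(\frac1H\sum_{h=1}^{H}\||\phi_h|\|_2^4\Bigr)^{1/4}\ \longrightarrow\ 0 .$$

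This last bound says $\int_X\bigl(\frac1H\sum_{h=1}^{H}|L_h|\bigr)\,d\mu\to 0$, so Fatou's lemma forces $\liminf_{H\to\infty}\frac1H\sum_{h=1}^{H}|L_h(x)|=0$ for a.e.\ $x$; inserting a sequence of $H$'s realising this liminf into the first displayed inequality (and using $1/(H+1)\to 0$) yields $\sup_{t}\bigl|\frac1N\sum_{n=1}^{N}e^{2\pi i nt}f(T^{kn}x)g(T^{ln}x)\bigr|\tend{N}{+\infty}0$ for a.e.\ $x$, which is the assertion. The genuine obstacle is the paragraph above: one must control $\|L_h\|_2$ by the \emph{seminorm} $\||\phi_h|\|_2$ --- which averages over $h$ to $\||f|\|_3=0$ --- and not merely by the full Kronecker projection $\|\E(\phi_h\mid\mathcal{Z}_1)\|_2$, whose $h$--average does not vanish in general; this is exactly what forces the $\ell^4$ (not $\ell^2$) Cauchy--Schwarz and the resonance--counting bound. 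A minor additional point is the reduction to ergodic $T^{l-k}$ in the mean ergodic theorem step, handled by passing to ergodic components.
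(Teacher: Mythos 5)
The paper does not actually prove this statement; it is imported verbatim from Assani--Duncan--Moore \cite{Assani-Moore}, whose proof is precisely the route you take --- van der Corput to strip the $\sup_t$, Bourgain's double recurrence theorem to produce the limits $L_h$, and Host--Kra seminorm/resonance-counting estimates to show that the Ces\`aro averages of $|L_h|$ vanish --- so your reconstruction is essentially the same argument and is correct. The one point worth recording is that the hypothesis ``$|\max(k,l)|>0$'' in the statement as printed is too weak (for $k=l$ and $g=\overline{f}$ the average tends to $\int|f|^2\,d\mu$), and your restriction to $k\neq l$, $kl\neq 0$ is both necessary for the result and exactly the case $k=p$, $l=q$ distinct primes used in Section \ref{sec:gc}.
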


\section{Proof of the main results-Theorem \ref{main-4} and Theorem \ref{main}.}\label{sec:gc}
Without loss of generality we assume that $\E(f_1|_{\mathcal{Z}_2})=0.$ We start by rewriting the equation \eqref{bo3} and \eqref{eq:Dav3} in the following form
 \begin{eqnarray}\label{bo-cs3}
&&\Big|\frac1{N^2}\sum_{n,m=1}^{N}\bmu(n)\bmu(m)\bmu(n+m)
f_1(T_1^nx) f_2(T_2^mx)f_3(T_3^{n+m}x)\Big|\nonumber\\
 &\leq&\|f_2\|_{\infty}  \sup_{z \in \T} \Big|\frac1{N}\sum_{n=1}^{N} \bmu(n)f_1(T_1^nx)
 z^{-n}\Big| \Big(\frac1{N}\int_{\T}\Big|\sum_{p=1}^{2N} \bmu(p)
 f_3(T_3^px) z^p\Big|^2 dz\Big)^{\frac12},
\end{eqnarray}
and
\begin{eqnarray}\label{eq:bo-cs4}
&&\Big|\frac1{N^2}\sum_{n,m=1}^{N}\bmu(n)\bmu(m)\bmu(n+m)
f_1(T_1^nx) f_2(T_2^mx)f_3(T_3^{n+m}x)\Big|\nonumber\\
&\leq& \sqrt{2}\|f_2\|_{\infty}\|f_3\|_{\infty} \sup_{z \in \T}  \Big|\frac1{N}\sum_{n=1}^{N} \bmu(n)f_1(T_1^nx)
 z^{-n}\Big|.
\end{eqnarray}

\noindent{}Since
$$\E\big(f_{1}|_{\mathcal{Z}_2}\big)=0,$$
it follows that,  for any $k,l \in \Z$ with $|\max(k,l)|>0$, for almost all $x \in X$, we have
$$ \sup_{t}\Big|\dfrac{1}{N}\sum_{n=1}^{N}e^{2 \pi i n t}f_{1}(T_1^{kn}x)f_{1}(T_1^{ln}x)\Big|\tend{N}{+\infty}0,$$
by WW's version of BDRT (Theorem \ref{WWBDRT}).
Whence
\[
\sup_{t}\Big|\frac1{N}\sum_{n=1}^{N}\bmu(n)f_{1}(T_1^nx) e^{2 \pi i n t }\Big| \tend{N}{+\infty}0,
\]
by WWKBSZ criterion (Theorem \ref{WWKBSZ}). This combined with \eqref{eq:bo-cs4} gives
$$\Big|\frac1{N^2}\sum_{n,m=1}^{N}\bmu(n)\bmu(m)\bmu(n+m)
f_{1,z}(T_1^nx) f_2(T_2^mx)f_3(T_3^{n+m}x)\Big| \tend{N}{+\infty}0,$$
and the proof of Theorem \ref{main-4} is complete.
\hfill{$\Box$}

\begin{rem}Notice that our result is valid if the weight is given by any multiplicative function $\bnu$ bounded by 1 and for which the following condition is satisfy
\begin{eqnarray}\label{DD}
\underset{g \in {\textrm{poly}}(\Z, (G_{p}))}{\Sup}\Big|\frac1{N}\sum_{n=1}^{N}\bnu(n)F(g(n)\Gamma)\Big|\tend{N}{+\infty}0,
\end{eqnarray}
where $(G_p)$ is a given filtration in a nilpotent group $G$,  $F : G/\Gamma\rightarrow [-1, 1]$ is a Lipschitz function and
$G/\Gamma$ is equipped with the metric $d_{\mathcal{X}}.$ We refer to condition \eqref{DD} as the strong Daboussi-Delange's condition.
\end{rem}
We end this section by giving the proof of our main result.

\begin{proof}[\textbf{Proof of Theorem \ref{main}}.]Let $f_1 \in L^{\infty}$. Then, by  NSZE-decomposition theorem (Theorem \ref{NSZE}), we can decompose $f_1$ for $k=2$ as follows
$$ f_1=f_{1,sc}+f_{1,z}+f_{1,e}.$$
By Theorem \ref{main-3} combined with Theorem \ref{main-4}, we have
$$\Big|\frac1{N^2}\sum_{n,m=1}^{N}\bmu(n)\bmu(m)\bmu(n+m)
f_{1,sc}(T_1^nx) f_2(T_2^mx)f_3(T_3^{n+m}x)\Big| \tend{N}{+\infty}0,$$
and
$$\Big|\frac1{N^2}\sum_{n,m=1}^{N}\bmu(n)\bmu(m)\bmu(n+m)
f_{1,z}(T_1^nx) f_2(T_2^mx)f_3(T_3^{n+m}x)\Big| \tend{N}{+\infty}0.$$
Therefore, for almost all $x \in X$, we have
\begin{eqnarray*}
&&\limsup \Big|\frac1{N^2}\sum_{n,m=1}^{N}\bmu(n)\bmu(m)\bmu(n+m)
f_{1}(T_1^nx) f_2(T_2^mx)f_3(T_3^{n+m}x)\Big|\\
&=& \limsup \Big|\frac1{N^2}\sum_{n,m=1}^{N}\bmu(n)\bmu(m)\bmu(n+m)
f_{1,e}(T_1^nx) f_2(T_2^mx)f_3(T_3^{n+m}x)\Big|\\
&\leq& \|f_2\|_{\infty}\|f_3\|_{\infty}\limsup \frac1{N}\sum_{n=1}^{N}|f_{1,e}(T^nx)|.
\end{eqnarray*}
But, by Birkhoff theorem, for almost all $x$, we have
$$\limsup \frac1{N}\sum_{n=1}^{N}|f_{1,e}(T^nx)|=\|f_{1,e}\|_1 \leq \epsilon.$$
Whence
$$\limsup \Big|\frac1{N^2}\sum_{n,m=1}^{N}\bmu(n)\bmu(m)\bmu(n+m)
f_{1}(T_1^nx) f_2(T_2^mx)f_3(T_3^{n+m}x)\Big| \leq \|f_2\|_{\infty}\|f_3\|_{\infty} \epsilon. $$
Since $\epsilon$ was arbitrary, we conclude that
$$\limsup \Big|\frac1{N^2}\sum_{n,m=1}^{N}\bmu(n)\bmu(m)\bmu(n+m)
f_{1}(T_1^nx) f_2(T_2^mx)f_3(T_3^{n+m}x)\Big|=0,$$
which achieves the proof of our main result.
\end{proof}
\begin{que}A natural problem suggested by our result is the following: do we have, for any $k \geq 3$,
\[
\dfrac1{N^k}\sum_{{\boldsymbol{n}} \in [1,N]^k}\prod_{{\boldsymbol{e}} \in C^*}\bnu({\boldsymbol{n}}.{\boldsymbol{e}})f_{{\boldsymbol{e}}}
\big(T_{{\boldsymbol{e}}}^{{\boldsymbol{n}}.{\boldsymbol{e}}}x\big) \tend{N}{+\infty}0?
\]
where ${\boldsymbol{n}}=(n_1,\cdots,n_k)$, ${\boldsymbol{e}}=(e_1,\cdots,e_k)$, $C^*=\{0,1\}^k\setminus\{(0,\cdots,0)\}$,
${\boldsymbol{n}}.{\boldsymbol{e}}$ is the usual inner product, and $\bnu$ is a bounded multiplicative function which satisfy
a strong Daboussi-Delange condition.\\

A second question is related to Sarnak's conjecture. Assume that $T$ satisfy Sarnak's conjecture,
 do we have for any continuous function, for all $x \in X$,
\[
\sup_{t}\Big|\frac1{N}\sum_{n=1}^{N}\bmu(n)f(T^nx) e^{2 \pi i n t }\Big| \tend{N}{+\infty}0? \textrm{~~(WWS)~~~}
\]
 Notice that the topological entropy of the cartesian product of two dynamical flow on compact set is the sum of their topological entropy \cite{GoodW}.\\
\end{que}

\begin{thank} The first author would like to express his heartfelt thanks to Professor Benjamin Weiss
for the discussions on the subject. It is a great pleasure also for him to acknowledge the warm
hospitality of University of Science and Technology of China and Fudan University where a part of this work has been done.
\end{thank}

\end{document}